\newtheorem{theorem}{Theorem}[section]
\newtheorem{lemma}[theorem]{Lemma}
\newtheorem{remark}[theorem]{Remark}
\newtheorem{example}[theorem]{Example}	
\newtheorem{corollary}[theorem]{Corollary}
\newtheorem{proposition}[theorem]{Proposition}
\newtheorem{definition}[theorem]{Definition}
\newcommand{\real}{{\mathbb{R}}}
\newcommand{\realnonnegative}{\mathbb{R}_{\ge 0}}
\newcommand{\integernonnegative}{\mathbb{Z}_{\ge 0}}
\newcommand{\GG}{{\mathcal{G}}}
\newcommand{\NN}{{\mathcal{N}}}
\newcommand{\UU}{{\mathcal{U}}}
\newcommand{\XX}{{\mathcal{X}}}
\newcommand{\unit}[1]{\operatorname{unit}(#1)}
\newcommand{\loc}[1]{\operatorname{loc}(#1)}
\newcommand{\diag}[1]{\operatorname{diag}\left( #1\right)}
\newcommand{\grad}{\nabla}
\newcommand{\vertices}{V}
\newcommand{\edges}{E}
\newcommand{\vmax}{v_{\text{max}}}
\newcommand{\umax}{u_{\text{max}}}
\renewcommand{\epsilon}{\varepsilon}
\newcommand{\reachable}{\mathcal{R}}
\newcommand{\powerset}{\mathbb{P}^\text{cc}}
\newcommand{\powersetc}{\mathbb{P}^\text{c}}
\renewcommand{\time}[1]{t_{\text{#1}}}
\newcommand{\until}[1]{\{1,\dots, #1\}}
\newcommand{\map}[3]{#1: #2 \rightarrow #3}
\newcommand{\setdef}[2]{\{#1 \; | \; #2\}}
\newcommand{\cball}[2]{\overline{B}(#1,#2)}
\newcommand{\TwoNorm}[1]{\|#1\|_2}
\renewcommand{\hat}{\widehat}
\newcommand{\commgraph}{\GG_\text{comm}}
\renewcommand{\commgraph}{\GG}
\newcommand{\uself}{u^\text{self}}
\newcommand{\uteam}{u^\text{team}}
\newcommand{\safecontrol}{u^\text{sf}}
\newcommand{\availablestates}{x_{\NN}}
\newcommand{\estimatestates}{\hat{x}_{\NN}}
\newcommand{\promisesets}{X_{\NN}}
\newcommand{\guaranteedsets}{\mathbf{X}_\NN}
\newcommand{\gset}{\mathbf{X}}
\newcommand{\delay}{\Delta}
\newcommand{\maxdelay}{\bar{\Delta}}
\newcommand{\maxnoise}{\bar{\omega}}
\newcommand{\si}{\mathcal{L}_iV^\text{sup}}
\newcommand{\dwellself}{T_{\text{d,self}}}
\newcommand{\dwellevent}{T_{\text{d,event}}}
\newcommand{\expirationevent}{T_{\text{exp}}}
\newcommand{\hausdorff}{d_H}
\newcommand{\functiondist}{d_{\text{func}}}
\newcommand{\staterule}{R^\text{s}}
\newcommand{\staticrule}{R^\text{cb}}
\renewcommand{\loc}{\operatorname{loc}}
\newcommand{\algomap}{M}
\newcommand{\timeschedule}{\mathcal{T}}
\newcommand{\oprocendsymbol}{\hbox{$\bullet$}}
\newcommand{\oprocend}{\relax\ifmmode\else\unskip\hfill\fi\oprocendsymbol}
\def\eqoprocend{\tag*{$\bullet$}}
\newcommand{\longthmtitle}[1]{\mbox{}\textup{\textbf{(#1)}}}
\newcommand{\myclearpage}{\clearpage}
\renewcommand{\myclearpage}{}
\newcommand{\algoselfrequests}{\textsc{self-triggered information
    updates}\xspace} 
\newcommand{\algosocial}{\textsc{team-triggered law}\xspace}
\newcommand{\algosocialdelays}{\textsc{robust team-triggered law}\xspace}
\begin{document}

\title{Team-triggered coordination for real-time
  \\
  control of networked cyber-physical systems\thanks{Preliminary
    versions of this paper have appeared as~\cite{CN-JC:13-acc}
    and~\cite{CN-JC:13-sv}.}}


\author{Cameron Nowzari \qquad Jorge Cort\'es \thanks{The authors are
    with the Department of Mechanical and Aerospace Engineering,
    University of California, San Diego, California, USA, {\tt\small
      \{cnowzari,cortes\}@ucsd.edu}}}

\maketitle

\begin{abstract}
  This paper studies the real-time implementation of distributed
  controllers on networked cyber-physical systems.  We build on the
  strengths of event- and self-triggered control to synthesize a
  unified approach, termed team-triggered, where agents make promises
  to one another about their future states and are responsible for
  warning each other if they later decide to break them. The
  information provided by these promises allows individual agents to
  autonomously schedule information requests in the future and sets
  the basis for maintaining desired levels of performance at lower
  implementation cost.  We establish provably correct guarantees for
  the distributed strategies that result from the proposed approach
  and examine their robustness against delays, packet drops, and
  communication noise.  The results are illustrated in simulations of
  a multi-agent formation control problem.
\end{abstract}

\section{Introduction}

A growing body of work studies the design and real-time implementation
of distributed controllers to ensure the efficient and robust
operation of networked cyber-physical systems.  In multi-agent
scenarios, energy consumption is correlated with the rate at which
sensors take samples, processors recompute control inputs, actuator
signals are transmitted, and receivers are left on listening for
potential incoming signals.  Performing these tasks periodically is
costly, might lead to inefficient implementations, or face hard
physical constraints.
To address these issues, the goal of triggered control is to identify
criteria that allow agents to tune the implementation of controllers
and sampling schemes to the execution of the task at hand and the
desired level of performance.  In event-triggered control, the focus
is on detecting events during the network execution that are relevant
from the point of view of task completion and should trigger specific
agent actions.  In self-triggered control, the emphasis is instead on
developing tests that rely only on current information available to
individual agents to schedule future actions. 
Event-triggered strategies generally result in less samples or controller 
updates but, when executed over networked systems, may 
be costly to implement because of the need for
continuous availability of the information required to check the
triggers. Self-triggered strategies are more easily amenable to
distributed implementation but result in conservative executions
because of the over-approximation by individual agents about the state
of the environment and the network.  These strategies might be also
beneficial in scenarios where leaving receivers on to listen to
potential messages is costly.  Our objective in this paper is to build
on the strengths of event- and self-triggered control to synthesize a
unified approach for controlling networked systems in real time that
combines the best of both worlds.

\subsubsection*{Literature review}

The need for systems integration and the importance of bridging the
gap between computing, communication, and control in the study of
cyber-physical systems cannot be
overemphasized~\cite{KDK-PRK:12,JS-XK-GK-NK-PA-VG-BG-JB-SW:12}.
Real-time controller implementation is an area of extensive research
including periodic~\cite{DH-WL:05,KJA-BW:96}, 
event-triggered~\cite{PW-MDL:09,KJA-BMB:02,PT:07,WPMHH-JHS-PPJB:08}, 
and self-triggered~\cite{MV-PM-JMF:03,RS-FF:06,AA-PT:10} 
procedures. Our approach shares with these works the aim of trading
computation and decision making for less communication, sensor, or
actuator effort while still guaranteeing a desired level of
performance.  Of particular relevance to this paper are works that
study self- and event-triggered implementations of controllers for
networked cyber-physical systems.  The predominant paradigm is that of
a single plant that is stabilized through a decentralized triggered
controller over a sensor-actuator network, see
e.g.~\cite{MMJ-PT:10,XW-NH:10,MCFD-WPMH:12}. Fewer works have
considered scenarios where multiple plants or agents together are the
subject of the overall control design. Exceptions include consensus
via
event-triggered~\cite{DVD-EF-KHJ:12,GS-KHJ:11,XM-TC:13} 
or self-triggered control~\cite{DVD-EF-KHJ:12,MMJ-PT:08},
rendezvous~\cite{YF-GF-YW-CS:13}, model predictive
control~\cite{AE-DVD-KJK:11}, and model-based event-triggered
control~\cite{EG-PJA:13,WPMHH-MCFD:13}.  The event-triggered
controller designed in~\cite{DVD-EF-KHJ:12} for a decentralized system
with multiple plants requires agents to have continuous information
about each others' states.  The works
in~\cite{DVD-EF-KHJ:12,CN-JC:11-auto} implement self-triggered
communication schemes to perform distributed control where agents
assume worst-case conditions for other agents when deciding when new
information should be obtained.  Distributed strategies based on
event-triggered communication and control are explored
in~\cite{XW-MDL:08}, where each agent has an a priori computed local
error tolerance and once it violates it, the agent broadcasts its
updated state to its neighbors.  The same event-triggered approach is
taken in~\cite{MZ-CGC:10} to implement gradient control laws that
achieve distributed optimization.  The
works~\cite{EG-PJA:13,XW-MDL:11,GSS-DVD-KHJ:13} are closer in spirit
to the ideas presented here.  In the interconnected system considered
in~\cite{EG-PJA:13}, each subsystem helps neighboring subsystems by
monitoring their estimates and ensuring that they stay within some
performance bounds. The approach requires different subsystems to have
synchronized estimates of one another even though they do not
communicate at all times.  In~\cite{XW-MDL:11,GSS-DVD-KHJ:13}, agents
do not have continuous availability of information from neighbors and
instead decide when to broadcast new information to them.

\paragraph*{Statement of contributions}

We propose a novel scheme for the real-time control of networked
cyber-physical systems that combines ideas from event- and
self-triggered control.  Our approach is based on agents making
promises to one another about their future states and being
responsible for warning each other if they later decide to break
them. This is reminiscent of event-triggered implementations.
Promises can be broad, from tight state trajectories to loose
descriptions of reachability sets.  With the information provided by
promises, individual agents can autonomously determine when in the
future fresh information will be needed to maintain a desired level of
performance. This is reminiscent of self-triggered implementations.
The benefits of the proposed scheme are threefold. First, because of
the availability of the promises, agents do not require continuous
state information about neighbors, in contrast to event-triggered
strategies implemented over distributed systems that require the
continuous availability of the information necessary to check the
relevant triggers.  Second, because of the extra information provided
by promises about what other agents plan to do, agents can generally
wait longer periods of time before requesting new information and
operate more efficiently than if only worst-case scenarios are
assumed, as is done in self-triggered control. Less overall
communication is beneficial in reducing the total network load and
decreasing chances of communication delays or packet drops due to
network congestion. Lastly, we provide theoretical guarantees for the
correctness and performance of team-triggered strategies implemented
over distributed networked systems. Our technical approach makes use
of set-valued analysis, invariance sets, and Lyapunov stability.  We
also show that, in the presence of physical sources of error and under
the assumption that 1-bit messages can be sent reliably with
negligible delay, the team-triggered approach can be slightly modified
to be robust to delays, packet drops, and communication noise.
Interestingly, the self-triggered approach can be seen as a particular
case of the team-triggered approach where promises among agents simply
consist of their reachability sets (and hence do not actually
constrain their state).  We illustrate the convergence and robustness
results through simulation in a multi-agent formation control problem,
paying special attention to the implementation costs and the role of
the tightness of promises in the algorithm performance.


\paragraph*{Organization}
Section~\ref{se:statement} lays out the problem of interest.
Section~\ref{se:triggered-control} briefly reviews current real-time
implementation approaches based on agent triggers.
Section~\ref{se:team-triggered} presents the team-triggered approach
for networked cyber-physical systems.  Sections~\ref{se:analysis}
and~\ref{se:robustness} analyze the correctness and robustness,
respectively, of team-triggered strategies.  Simulations illustrate
our results in Section~\ref{se:simulations}.  Finally,
Section~\ref{se:conclusions} gathers our conclusions and ideas for
future work.

\paragraph*{Notation}
We let $\real$, $\realnonnegative$, and $\integernonnegative$ denote
the sets of real, nonnegative real, and nonnegative integer numbers,
respectively. The two-norm of a vector is $\TwoNorm{\cdot}$.  Given $x
\in \real^d$ and $\delta \in \realnonnegative$, $\cball{x}{\delta}$
denotes the closed ball centered at $x$ with radius $\delta$.  For
$A_i \in \real^{m_i \times n_i}$ with $i \in \until{N}$, we denote by
$\diag{A_1, \dots, A_N} \in \real^{m \times n}$ the block-diagonal
matrix with $A_1$ through $A_N$ on the diagonal, where $m=\sum_{i=1}^N
m_i$ and $n=\sum_{i=1}^N n_i$.  Given a set $S$, we denote by $|S|$
its cardinality. We let~$\powersetc(S)$, respectively~$\powerset(S)$,
denote the collection of compact, respectively, compact and connected,
subsets of~$S$. The Hausdorff distance between $S_1, S_2 \subset
\real^d$ is
\begin{align*}
  \hausdorff(S_1,S_2) = \max \{ \sup_{x \in S_1} \inf_{y \in S_2}
  \TwoNorm{x - y}, \sup_{y \in S_2} \inf_{x \in S_1} \TwoNorm{x - y}
  \} .
\end{align*}
The Hausdorff distance is a metric on the set of all non-empty compact
subsets of $\real^d$. Given two bounded set-valued functions $C_1, C_2
\in \mathcal{C}^0( I \subset \real ; \powersetc(\real^d) )$, its
distance is
\begin{align}\label{eq:dist-func}
  \functiondist(C_1,C_2) = \sup_{t \in I} \hausdorff( C_1(t), C_2(t) ) .
\end{align}
An undirected graph~$\commgraph = (\vertices, \edges)$ is a pair
consisting of a set of vertices $\vertices = \until{N}$ and a set of
edges $\edges \subset \vertices \times \vertices$ such that if $(i, j)
\in \edges$, then $(j, i) \in \edges$. The set of neighbors of a
vertex~$i$ is $\NN(i) = \setdef{j \in \vertices}{(i, j) \in \edges}$.
Given $v \in \prod_{i=1}^N \real^{n_i}$, we let $v^i_\NN = ( v_i, \{
v_j \}_{j \in \NN(i)})$ denote the components of $v$ that correspond
to vertex $i$ and its neighbors in~$\commgraph$.

\myclearpage
\section{Network modeling and problem statement}\label{se:statement}

We consider a distributed control problem carried out over an
unreliable wireless network. Consider $N$ agents whose communication
topology is described by an undirected graph~$\commgraph$. The fact
that $(i, j)$ belongs to $\edges$ models the ability of agents $i$ and
$j$ to communicate with one another. The agents~$i$ can communicate
with are its neighbors $\NN(i)$ in~$\commgraph$.  The state of~$i \in
\until{N}$, denoted $x_i$, belongs to a closed set $ \XX_i \subset
\real^{n_i}$.  The network state $x = \left( x_1 ,\dots ,x_N \right)$
therefore belongs to $\XX = \prod_{i=1}^N \XX_i$.
According to the discussion above, agent $i$ can access
$\availablestates^i $ when it communicates with its neighbors.  By
assumption, each agent has access to its own state at all times.  We
consider linear dynamics for each $i \in \until{N}$,
\begin{align}\label{eq:agentdynamics}
  \dot{x}_i = f_i(x_i, u_i) = A_i x_i + B_i u_i,
\end{align}
with $A_i \in \real^{n_i \times n_i}$, $B_i \in \real^{n_i \times
  m_i}$, and $u_i \in \UU_i $. Here, $\UU_i \subset \real^{m_i}$ is a
closed set of allowable controls for agent~$i$.  
We assume the existence of a \emph{safe-mode} controller
$\map{\safecontrol_i}{\XX_i}{\UU_i}$,
\begin{align*}
  A_i x_i + B_i \safecontrol_i(x_i) = 0 , \quad \text{for all } x_i
  \in \XX_i,
\end{align*}
i.e., a controller able to keep agent $i$'s state fixed.  The
existence of a safe-mode controller for a general controlled system
may seem restrictive, but there exist many cases, including nonlinear
systems, that admit one, such as single integrators or vehicles with
unicycle
dynamics. 
Letting $u = \left( u_1, \dots, u_N \right) \in \UU = \prod_{i=1}^N
\UU_i$, the dynamics can be described by
\begin{align}\label{eq:networkdynamics}
  \dot{x} = 
  A x + B u,
\end{align}
with $A = \diag{ A_1, \dots, A_N } \in \real^{n \times n}$ and $B =
\diag{ B_1, \dots, B_N } \in \real^{n \times m}$, where $n =
\sum_{i=1}^N n_i$, and $m = \sum_{i=1}^N m_i$.
We refer to the team of agents with communication
topology~$\commgraph$ and dynamics~\eqref{eq:networkdynamics}, where
each agent has a safe-mode controller and access to its own state at
all times, as a \emph{networked cyber-physical system}.  The goal is
to drive the agents' states to some desired closed set of
configurations $D \subset \XX$ and ensure that it stays there.
Depending on how $D$ is defined, this objective can capture different
coordination tasks, including deployment, rendezvous, and formation
control. The goal of the paper is not to design the controller that
achieves this but rather synthesize efficient strategies for the
real-time implementation of a given controller.

Given the agent dynamics, the communication graph $\commgraph$, and
the set $D$, our starting point is the availability of a control law
that drives the system asymptotically to $D$. Formally, we assume that
a continuous map~$\map{u^*}{\XX}{\UU}$ and a continuously
differentiable function $\map{V}{\XX}{\real}$, bounded from below
exist such that $D$ is the set of minimizers of $V$ and, for all $x
\notin D$,
\begin{subequations}\label{eq:derivativeassumption}
  \begin{align}
    \grad_i V(x) \left( A_i x_i + B_i u_i^*(x) \right) & \leq 0 ,
    \quad i \in \until{N},
    \label{eq:localassumption}
    \\
    \sum_{i = 1}^N \grad_i V(x) \left( A_i x_i + B_i u_i^*(x) \right)
    & < 0 . \label{eq:desiredderivative}
  \end{align}
\end{subequations}
We assume that both the control law $u^*$ and the gradient $\nabla V$
are distributed over $\commgraph$. By this we mean that, for each $i
\in \until{N}$, the $i$th component of each of these objects only
depends on $\availablestates^{i}$, rather than on the full network
state~$x$. For simplicity, and with a slight abuse of notation, we
write $u_i^*( \availablestates^{i}) \in \UU_i$ and $\grad_i
V(\availablestates^{i}) \in \real^{n_i}$ to emphasize this fact when
convenient. This property has the important consequence that agent $i$
can compute these quantities with the exact information it can obtain
through communication on~$\commgraph$.  


\begin{remark}\longthmtitle{Assumption on non-negative contribution of
    each agent to task completion} {\rm Note
    that~\eqref{eq:desiredderivative} simply states that $V$ is a
    Lyapunov function for the closed-loop
    system. Instead,~\eqref{eq:localassumption} is a more restrictive
    assumption that essentially states that each agent does not
    individually contribute in a negative way to the evolution of the
    Lyapunov function.  This latter assumption can in turn be
    relaxed~\cite{MMJ-PT:10} by selecting parameters
    $\alpha_1,\dots,\alpha_N \in \real$ with $\sum_{i=1}^N \alpha_i =
    0$ (note that some $\alpha_i$ would be positive and others
    negative) and specifying instead that, for each $i \in \until{N}$,
    the left-hand side of~\eqref{eq:localassumption} should be less
    than or equal to~$\alpha_i$. Along these lines, one could envision
    the design of distributed mechanisms to dynamically adjust these
    parameters, but we do not go into details here for space reasons.
    \oprocend}
\end{remark}

From an implementation viewpoint, the controller $u^*$ requires
continuous agent-to-agent communication and continuous updates of the
actuator signals, making it unfeasible for practical scenarios. In the
following section we develop a self-triggered communication and
control strategy to address the issue of selecting time instants for
information sharing.


\myclearpage
\section{Self-triggered communication and control}\label{se:triggered-control}

This section provides an overview of the self-triggered communication
and control approach to solve the problem described in
Section~\ref{se:statement}. In doing so, we also introduce several
concepts that play an important role in our discussion later.  The
general idea 
is to guarantee that the time derivative of the Lyapunov function~$V$
along the trajectories of the networked cyber-physical
system~\eqref{eq:networkdynamics} is less than or equal to $0$ at all
times, even when the information used by the agents is inexact.

To model the case that agents do not have perfect information about
each other at all times, we let each agent $i \in \until{N}$ keep an
estimate $\hat{x}_j^i$ of the state of each of its neighbors $j \in
\NN(i)$. Since $i$ always has access to its own state,
$\estimatestates^i(t) = ( x_i(t) , \{ \hat{x}_j^i(t) \}_{j \in
  \NN(i)})$ is the information available to agent $i$ at time $t$.
Since agents do not have access to exact information at all times,
they cannot implement the controller $u^*$ exactly, but instead use
the feedback law
\begin{align*}
  \uself_i(t) = u_i^*(\estimatestates^i(t)).
\end{align*}
We are now interested in designing a triggering method such that agent
$i$ can decide when $\estimatestates^i(t)$ needs to be updated. Let
$\time{last}$ be the last time at which all agents have received
information from their neighbors.  Then, the time $\time{next}$ at
which the estimates should be updated is when
\begin{align}\label{eq:eventnetwork}
  \frac{d}{dt} V (x(\time{next})) =
  \sum_{i =1}^N \grad_i V(x(\time{next})) \left( A_i x_i(\time{next}) +
    B_i \uself_i(\time{last}) \right) = 0 .
\end{align}
Unfortunately,~\eqref{eq:eventnetwork} requires global information and
cannot be checked in a distributed way. Instead, one can define a
local event that defines when a single agent $i \in \until{N}$ should
update its information as any time that
\begin{align}\label{eq:eventlocal}
  \grad_i V(x(t)) \left( A_i x_i(t) + B_i
    \uself_i(t) \right) = 0 .
\end{align}
As long as each agent~$i$ can ensure the local
event~\eqref{eq:eventlocal} has not yet occurred, it is guaranteed
that~\eqref{eq:eventnetwork} has not yet occurred either. The problem
with this approach is that each agent $i \in \until{N}$ needs to have
continuous access to information about the state of its neighbors
$\NN(i)$ in order to evaluate $ \grad_i V(x) = \grad_i
V(\availablestates^{i}) $ and check condition~\eqref{eq:eventlocal}.
The self-triggered approach removes this requirement on continuous
availability of information by having each agent employ instead the
possibly inexact information about the state of their neighbors.  The
notion of reachability set plays a key role in achieving this.  Given
$y \in \XX_i$, the \emph{reachable set} of points
under~\eqref{eq:agentdynamics} starting from $y$ in $s$ seconds is,
\begin{align*}
  \reachable_i(s,y) = \{ z \in \XX_i \; | \; \exists \, \map{u_i}{ [0,
    s] }{\UU_i} \text{\rm{ such that }} z = e^{A_i s} y + \int_{0}^{s}
  e^{A_i (s-\tau)} B_i u_i(\tau) d\tau \} .
\end{align*}
Using this notion, if agents have exact knowledge about the dynamics
and control sets of its neighboring agents (but not their
controllers), each agent can construct, each time state information is
received, sets that are guaranteed to contain their neighbors' states.

\begin{definition}[Guaranteed sets]
  {\rm If $\time{last}^i$ is the time at which agent~$i$ receives
    state information $x_j(\time{last}^i)$ from its neighbor $j \in
    \NN(i)$, then the \emph{guaranteed set} is given by
    \begin{align}\label{eq:guaranteed}
      \gset_j^i(t,\time{last}^i,x_j(\time{last}^i)) =
      \reachable_j(t-\time{last}^i,x_j(\time{last}^i)) \subset \XX_j ,
    \end{align}
    and is guaranteed to contain $x_j(t)$ for $t \geq
    \time{last}^i$.}
\end{definition}

We let $\gset_j^i(t) = \gset_j^i(t,\time{last}^i,x_j(\time{last}^i))$
when the starting state $x_j(\time{last}^i)$ and time $\time{last}^i$
do not need to be emphasized. We denote by $\guaranteedsets^i(t) = (
x_i(t), \{ \gset_j^i(t) \}_{j \in \NN(i)} )$ the information available
to agent $i$ at time $t$.

\begin{remark}[Computing reachable
  sets]\label{re:computing-reachable-sets} {\rm Finding the guaranteed
    or reachable sets~\eqref{eq:guaranteed} can be in general
    computationally expensive. A common approach consists of computing
    over-approximations to the actual reachable set via convex
    polytopes or ellipsoids. There exist efficient algorithms to
    calculate and store these for various classes of systems, see
    e.g.,~\cite{MA-CLG-BHK:11,GF:05}. Furthermore, agents can deal
    with situations where they do not have exact knowledge about the
    dynamics of their neighbors (so that the guaranteed sets cannot be
    computed exactly) by employing over-approximations of the actual
    guaranteed sets. \oprocend}
\end{remark}



With the guaranteed sets in place, we can now provide a test that
allows agents to determine when they should update their current
information and control signals.  
At time $\time{last}^i$, agent $i$ computes the next time
$\time{next}^i \ge \time{last}^i$ to acquire information via
\begin{align}\label{eq:selftest}
  \sup_{y_\NN \in \guaranteedsets^i(\time{next}^i)} \grad_i V(y_\NN)
  \left( A_i x_i(\time{next}^i) + B_i \uself_i(\time{next}^i) \right)
  = 0.
\end{align}
By~\eqref{eq:localassumption} and the fact that
$\gset_j^i(\time{last}^i) = \{ x_j(\time{last}^i) \}$, at time
$\time{last}^i$,
\begin{align*}
  \sup_{y_\NN \in \guaranteedsets^i(\time{last}^i)} \grad_i V(y_\NN)
  \left( A_i x_i(\time{last}^i) + B_i \uself_i(\time{last}^i) \right)
  = \grad_i V(\availablestates^i(\time{last}^i)) \left( A_i
    x_i(\time{last}^i) + B_i \uself_i(\time{last}^i) \right) \leq 0 .
\end{align*}
If all agents use this triggering criterion for updating information,
it is guaranteed that $ \frac{d}{dt} V (x(t))\le 0$ at all times
because, for each $i \in \until{N}$, the true state $x_j(t)$ is
guaranteed to be in $\gset_j^i(t)$ for all $j \in \NN(i)$ and $t \geq
\time{last}^i$.

The condition~\eqref{eq:selftest} is appealing because it can be
evaluated by agent $i$ with the information it possesses at time
$\time{last}^i$. Once determined, agent $i$ schedules that, at time
$\time{next}^i$, it will request updated information from its
neighbors. 
We refer to $\time{next}^i - \time{last}^i$ as the
\emph{self-triggered request time} for agent $i$. Due to the
conservative way in which $\time{next}^i$ is determined, it is
possible that $\time{next}^i = \time{last}^i$ for some $i$, which
would mean that instantaneous information updates are necessary (note
that this cannot happen for all $i \in \until{N}$ unless the network
state is already in $D$). This can be dealt with by introducing a
dwell time such that a minimum amount of time must pass before an
agent can request new information and using the safe-mode controller
while waiting for the new information.  We do not enter into details
here and defer the discussion to Section~\ref{se:selftime}.

The problem with the self-triggered approach is that the resulting
times are often conservative because the guaranteed sets can grow
large quickly as they capture all possible trajectories of neighboring
agents. It is conceivable that improvements can be made from tuning
the guaranteed sets based on what neighboring agents \emph{plan} to do
rather than what they \emph{can} do.  This observation is at the core
of the team-triggered approach proposed next.

\myclearpage
\section{Team-triggered coordination}\label{se:team-triggered}

This section presents the team-triggered approach for the real-time
implementation of distributed controllers on networked cyber-physical
systems. The team-triggered approach incorporates the reactive nature
of event-triggered approaches and, at the same time, endows individual
agents with the autonomy characteristic of self-triggered approaches
to determine when and what information is needed.
Agents make promises to their neighbors about their future
states and inform them if these promises are violated later (hence the
connection with event-triggered control). With the extra information
provided by the availability of the promises, each agent computes the
next time that an update is required and requests information from their
neighbors accordingly to guarantee the monotonicity of
the Lyapunov function $V$ introduced in Section~\ref{se:statement}
(hence the connection with self-triggered control).


\subsection{Promises}\label{se:promises}

A \emph{promise} can be either a time-varying set of states (state
promise) or controls (control promise) that an agent sends to another
agent. 

\begin{definition}[State promises and rules]
  {\rm A \emph{state promise} that agent $j\in \until{N}$ makes to
    agent $i$ at time $t$ is a set-valued, continuous (with respect to
    the Hausdorff distance) function $X_j^i[t] \in \mathcal{C}^0([t,
    \infty) ; \powerset(\XX_j))$.  A \emph{state promise rule} for
    agent $j \in \until{N}$ generated at time $t$ is a continuous
    (with respect to the distance $\functiondist$ defined
    in~\eqref{eq:dist-func}) map of the form $\staterule_j :
    \mathcal{C}^0 \left( [t,\infty); \prod_{i \in \NN(j) \cup \{ j\}}
      \powerset(\XX_i) \right) \rightarrow \mathcal{C}^0 \left(
      [t,\infty) ; \powerset \left( \XX_j \right) \right)$.}
\end{definition}

The notation $X_j^i[t](t')$ conveys the promise $x_j(t') \in
X_j^i[t](t')$ that agent $j$ makes at time $t$ to agent $i$ about
time~$t' \ge t$. A state promise rule is simply a way of generating
state promises.  This means that if agent $j$ must send information to
agent $i$ at time $t$, it sends the state promise $X_j^i[t] =
\staterule_j(\promisesets^j[\cdot]_{|[t,\infty)} )$. We require that,
in the absence of communication delays or noise in the state
measurements, the promises generated by a rule have the property that
$X_j^i[t](t) = \{ x_j(t) \}$.  For simplicity, when the time at which
a promise is received is not relevant, we use the notation
$X_j^i[\cdot]$, or simply $X_j^i$. All promise information available
to agent $i \in \until{N}$ at some time $t$ is given by
$\promisesets^i[\cdot]_{|[t, \infty)} = ({x_i}_{|[t,\infty)} , \{
X_j^i[\cdot]_{|[t,\infty)} \}_{j \in \NN(i)}) \in \mathcal{C}^0 \left(
  [t,\infty); \prod_{j \in \NN(i) \cup \{ i\}} \powerset(\XX_j)
\right)$. To extract information from this about a specific time $t'$,
we use $\promisesets^i[\cdot](t')$ or simply $\promisesets^i(t') = (
x_i(t') , \{ X_j^i[\cdot](t') \}_{j \in \NN(i)} ) \in \prod_{j \in
  \NN(i) \cup \{ i\}} \powerset(\XX_j)$. The generality of the above
definitions allow promise sets to be arbitrarily complex but we
restrict ourselves to promise sets that can be described with a finite
number of parameters.




\begin{remark}\longthmtitle{Example promise and
    rule}\label{re:control-promises-rules} 
  {\rm Alternative to directly sending state promises, agents can
    share their promises based on their control rather than their
    state. The notation $U_j^i[t](t')$ conveys the promise $u_j(t')
    \in U_j^i[t](t')$ that agent~$j$ makes at time $t$ to agent $i$
    about time~$t' \ge t$.  Given the dynamics of agent $j$ and state
    $x_j(t)$ at time $t$, agent $i$ can compute the state promise
    for~$t' \ge t$,
    \begin{align}\label{eq:promise}
      X_j^i[t](t') &= \{ z \in \XX_j \; | \; \exists \, \map{u_j}{ [t,
        t'] }{\UU_j} \text{ with } u_j(s) \in U_j^i[t](s) \text{ for }
      s \in [t,t']
      \\
      & \hspace*{3cm} \text{ such that } z = e^{A_j (t'-t)} x_j(t) +
      \int_{t}^{t'} e^{A_j (t'-\tau)} B_j u_j(\tau) d\tau \} . \notag
    \end{align}
    As an example, given $j \in \until{N}$, a continuous control law
    $u_j : \prod_{i \in \NN(j) \cup \{j \}} \powerset ( \XX_i )
    \rightarrow \UU_j$, and $\delta_j > 0$, the \emph{ball-radius
      control promise rule} for agent~$j$ generated at time~$t$ is
    \begin{align}\label{eq:staticpromise}
      \staticrule_j(\promisesets^j[\cdot]_{|[t,\infty)}) (t')=
      \cball{u_j(\promisesets^j(t))}{\delta_j} \cap \UU_j \qquad t'
      \geq t .
    \end{align}
    Note that this promise is a ball of radius $\delta_j$ in the
    control space $\UU_j$ centered at the control signal used at
    time~$t$.  Depending on whether $\delta_j$ is constant or changes
    with time, we refer to it as the static or dynamic ball-radius
    rule, respectively.  The promise can be sent with three
    parameters, the state $x_j(t)$ when the promise was sent, the
    control action $u_j(\promisesets^j(t))$ at that time, and the
    radius~$\delta_j$ of the ball. The state promise can then
    be generated using~\eqref{eq:promise}. \oprocend
  }
\end{remark}



Promises allow agents to predict the evolution of their neighbors more
accurately, which directly affects the network behavior.  In general,
tight promises correspond to agents having good information about
their neighbors, which at the same time may result in an increased
communication effort (since the promises cannot be kept for long
periods of time).  On the other hand, loose promises correspond to
agents having to use more conservative controls due to the lack of
information, while at the same time potentially being able to operate
for longer periods of time without communicating (because promises are
not violated).

The availability of promises equips agents with set-valued information
models about the state of other agents. This fact makes it necessary
to address the definition of distributed controllers that operate on
sets, rather than points. We discuss this in
Section~\ref{se:controllers-set-valued}.  The additional information
that promises represent is beneficial to the agents because it
decreases the amount of uncertainty when making action
plans. Section~\ref{se:selftime} discusses this in detail.  Finally,
these advantages rely on the assumption that promises hold throughout
the evolution. As the state of the network changes and the level of
task completion evolves, agents might decide to break former promises
and make new ones. We examine this in Section~\ref{se:brokenpromises}.


\subsection{Controllers on set-valued information
  models}\label{se:controllers-set-valued}

Here we discuss the type of controllers that the team-triggered
approach relies on.  The underlying idea is that, since agents possess
set-valued information about the state of other agents through
promises, controllers themselves should be defined on sets, rather
than on points.  There are different ways of designing controllers
that operate with set-valued information depending on the type of
system, its dynamics, or the desired task, see
e.g.,~\cite{FB-SM:08}. For the problem of interest here, we offer the
following possible goals. One may be interested in simply decreasing
the value of a Lyapunov function as fast as possible, at the cost of
more communication or sensing. Alternatively, one may be interested in
choosing the stabilizing controller such that the amount of required
information is minimal at a cost of slower convergence time.
We consider continuous (with respect to the Hausdorff distance)
controllers of the form $u^{**} : \prod_{j \in \until{N}} \powerset(
\XX_j ) \rightarrow \real^{m}$ that satisfy
\begin{subequations}\label{eq:social-assumptions}
  \begin{align}
    \grad_i V(x) \left( A_i x_i + B_i u^{**}_i( \{ x \}) \right) &
    \leq 0, \quad i \in \until{N},
    \label{eq:localassumptionsocial}
    \\
    \sum_{i = 1}^N \grad_i V(x) \left( A_i x_i + B_i u^{**}_i( \{ x
      \}) \right) & < 0.
    \label{eq:desiredderivativesocial}
  \end{align}
\end{subequations}
In other words, if exact, singleton-valued information is available to
the agents, then the controller~$u^{**}$ guarantees the monotonic
evolution of the Lyapunov function~$V$. We assume that $u^{**}$ is
distributed over the communication graph~$\commgraph$. As before, this
means that for each $i \in \until{N}$, the $i$th component $u^{**}_i$
can be computed with information in $\prod_{j \in \NN(i) \cup \{i\}}
\powerset( \XX_j )$ rather than in the full space $\prod_{j \in
  \until{N}} \powerset( \XX_j )$.

Controllers of the above form can be derived from the availability of
the controller $\map{u^*}{\XX}{\UU}$ introduced in
Section~\ref{se:statement}.  Specifically, let $E: \prod_{j=1}^N
\powerset ( \XX_j ) \rightarrow \XX$ be a continuous map that is
distributed over~$\commgraph$ and satisfies, for each $i \in
\until{N}$, that $E_i(Y) \in Y_i$ for each $Y \in \prod_{j=1}^N
\powerset ( \XX_j )$ and $E_i( \{ y \} ) = y_i$ for each $y \in \XX$.
Essentially, what the map $E$ does for each agent is select a point
from the set-valued information that it possesses.  Now, define
\begin{align}\label{eq:examplecontrol}
  u^{**}(Y) = u^*( E(Y)).
\end{align}
Note that this controller satisfies~\eqref{eq:localassumptionsocial}
and~\eqref{eq:desiredderivativesocial} because~$u^*$
satisfies~\eqref{eq:localassumption} and~\eqref{eq:desiredderivative}.

\begin{example}[Controller definition with the ball-radius promise
  rules]\label{ex:estimate}
  {\rm Here we construct a controller $u^{**}$
    using~\eqref{eq:examplecontrol} for the case when promises are
    generated according to the ball-radius control rule described in
    Remark~\ref{re:control-promises-rules}.  To do so, note that it is
    sufficient to define the map $E: \prod_{j=1}^N \powerset ( \XX_j )
    \rightarrow \XX$ only for tuples of sets of the form given
    in~\eqref{eq:promise}, where the corresponding control promise is
    defined by~\eqref{eq:staticpromise}. With the notation of
    Remark~\ref{re:control-promises-rules}, recall that the promise
    that an agent $j$ sends at time $t$ is conveyed through three
    parameters $(y_j,v_j,\delta_j)$, the state $y_j = x_j(t)$ when the
    promise was sent, the control action $v_j =
    u_j(\promisesets^j(t))$ at that time, and the radius~$\delta_j$ of
    the ball. We can then define the $j$th component of the map $E$ as
    \begin{align*}
      E_j(X_1[t](t'),\dots,X_N[t](t')) = e^{A_j (t'-t)} y_j +
      \int_{t}^{t'} e^{A_j (t'-\tau)} B_j v_j d\tau ,
    \end{align*}
    which is guaranteed to be in $X_j[t](t')$ for~$t' \geq t$. This
    specification amounts to each agent $i$ calculating the evolution
    of its neighbors $j \in \NN(i)$ as if they were using a zero-order
    hold control.
    \oprocend}
\end{example}

\subsection{Self-triggered information updates}\label{se:selftime}

Here we discuss how agents use the promises received from other agents
to generate self-triggered information requests in the future.  Let
$\time{last}^i$ be some time at which agent $i$ receives updated
information (i.e., promises) from its neighbors.  Until the next time
information is obtained, agent~$i$ has access to the collection of
functions $\promisesets^i$ describing its neighbors' state and can
compute its evolution under the controller~$u^{**}$ via
\begin{align}\label{eq:compute-own-evolution}
  x_i(t) = e^{A_i (t-\time{last}^i)} x_i(\time{last}^i) +
  \int_{\time{last}^i}^{t} e^{A_i (t-\tau)} B_i
  u^{**}_i(\promisesets^i(\tau)) d\tau , \quad t \ge \time{last}^i .
\end{align}
Note that this evolution of agent $i$ can be viewed as a promise that
it makes to itself, i.e., $X_i^i[\cdot](t) = \{ x_i(t) \}$.  With this
in place, $i$ can schedule the next time $\time{next}^i$ at which it
will need updated information from its neighbors by computing the
worst-case time evolution of $V$ along its trajectory among all the
possible evolutions of its neighbors given the information contained
in their promises. Formally,
we define, for $Y_\NN \in \prod_{j \in \NN(i) \cup \{ i \}} \powerset
( \XX_j )$,
\begin{align}\label{eq:upper-bound-Lie-derivative}
  \si(Y_\NN) = \sup_{y_\NN \in Y_\NN} \grad_i V(y_\NN) \left( A_i y_i
    + B_i u_i^{**}( Y_\NN ) \right) ,
\end{align}
where $y_i$ is the element of $y_\NN$ corresponding to $i$.  Then, the
trigger for when agent $i$ needs new information from its neighbors is
similar to~\eqref{eq:selftest}, where we now use the promise sets
instead of the guaranteed sets. Specifically, the critical time at
which information is requested is given by $\time{next}^i = \max \{
\time{last}^i + \dwellself, t^* \}$, where $\dwellself > 0$ is an a
priori chosen parameter that we discuss below and $t^*$ is implicitly
defined by
\begin{align}\label{eq:socialselftrigger}
t^* = \min \setdef{t \geq \time{last}^i}{\si(\promisesets^i(t)) = 0}.
\end{align}
This ensures that for $t \in [\time{last}^i, t^*)$, agent $i$ is
guaranteed to be contributing positively to the desired task.
We refer to $\time{next}^i - \time{last}^i$ as the self-triggered
request time.
The parameter $\dwellself > 0$ is the \emph{self-triggered dwell
  time}. We introduce it because, in general, it is possible that $t^*
= \time{last}^i$, implying that instantaneous communication is
required.  The dwell time is used to prevent this behavior as
follows. Note that $\si(\promisesets^i(t')) \leq 0$ is only guaranteed
while $t'\in [\time{last}^i, t^*]$. Therefore, in case that
$\time{next}^i=\time{last}^i + \dwellself$, i.e., if $t^* \leq
\time{last}^i + \dwellself$, agent $i$ uses the safe-mode control
during $t' \in (t^*, \time{last}^i+\dwellself]$ to leave its state
fixed. This design ensures the monotonicity of the evolution of~$V$
along the network execution.  The team-triggered controller is defined~by
\begin{align}\label{eq:agentsocialcontrol}
  \uteam_i(t) = \begin{cases} u_i^{**}(\promisesets^i(t)) , & \text{if
    } t \leq t^* ,
    \\
    \safecontrol_i(x_i(t)) , & \text{if } t > t^*
    ,
  \end{cases}
\end{align}
for $t \in [\time{last}^i,\time{next}^i)$, where $t^*$ is given
by~\eqref{eq:socialselftrigger}.  Note that the self-triggered dwell
time~$\dwellself$ only limits the frequency at which an agent~$i$ can
\emph{request} information from its neighbors and does not provide
guarantees on inter-event times of when its memory is updated or its
control is recomputed. If a neighboring agent sends information to
agent $i$ before this dwell time has expired (because that agent has
broken a promise), this triggers agent $i$ to update its memory and
potentially recompute its control law.




\subsection{Event-triggered information
  updates}\label{se:brokenpromises}

Agent promises may need to be broken for a variety of reasons. For
instance, an agent might receive new information from its neighbors,
causing it to change its former plans. Another example is given by an
agent that made a promise that is not able to keep for as long as it
anticipated.
Consider an agent $i \in \until{N}$ that has sent a promise
$X_i^j[\time{last}]$ to a neighboring agent $j$ at some
time~$\time{last}$.  If agent $i$ ends up breaking its promise at time
$t^* \geq \time{last}$, i.e., $x_i(t^*) \notin
X_i^j[\time{last}](t^*)$, then it is responsible for sending a new
promise~$X_i^j[\time{next}]$ to agent~$j$ at time $\time{next} = \max
\{ \time{last} + \dwellevent, t^* \}$, where $\dwellevent > 0$ is an a
priori chosen parameter that we discuss below.  This implies that~$i$
must keep track of promises made to its neighbors and monitor them in
case they are broken. Note that this mechanism is implementable
because each agent only needs information about its own state and the
promises it has made to determine whether the trigger is satisfied.



The parameter $\dwellevent > 0$ is known as the \emph{event-triggered
  dwell time}. We introduce it because, in general, the time $t^* -
\time{last}$ between when agent $i$ makes and breaks a promise to an
agent~$j$ might be arbitrarily small. The issue, however, is that if
$t^* < \time{last} + \dwellevent$, agent $j$ operates under incorrect
information about agent $i$ for $t \in [t^*, \time{last} +
\dwellevent)$. We deal with this by introducing a warning message WARN
that agent $i$ must send to agent $j$ when it breaks its promise at
time $t^* < \time{last} + \dwellevent$.
If agent $j$ receives such a warning message, it redefines the promise
$X_i^j$ using the guaranteed sets~\eqref{eq:guaranteed} as follows,
\begin{align}\label{eq:promise-delay}
  X_i^j[\cdot](t) &= \bigcup_{x_i \in X_i^j[\cdot](t^*)}
  \gset_i^j(t,x_i) = \bigcup_{x_i \in X_i^j[\cdot](t^*)}
  \reachable_i(t-t^*,x_i)
\end{align}
for $t \geq t^*$, until the new message arrives at time $\time{next} =
\time{last} + \dwellevent$. By definition of the reachable set, the
promise $X_i^j[\cdot](t)$ is guaranteed to contain $x_i(t)$ for $t
\geq t^*$.

\begin{remark}[Promise expiration times]\label{re:expiration} {\rm It
    is also possible to set an expiration time~$\expirationevent >
    \dwellevent$ for the validity of promises. If this in effect and a
    promise is made at $\time{last}$, it is only valid for $t \in
    [\time{last}, \time{last} + \expirationevent]$. The expiration of
    the promise triggers the formulation of a new one.  \oprocend }
\end{remark}

The combination of the self- and event-triggered information updates
described above together with the team-triggered controller~$\uteam$
as defined in~\eqref{eq:agentsocialcontrol} gives rise to the
\algosocial, which is formally presented in
Algorithm~\ref{tab:social}. The self-triggered information request in
Algorithm~\ref{tab:social} is executed by an agent anytime new
information is received, whether it was actively requested by the
agent, or was received from some neighbor due to the breaking of a
promise. 

\begin{algorithm}[htb]
  {\footnotesize \vspace*{1ex}
    \emph{(Self-trigger information update)}
    \\
    \parbox{\linewidth}{At any time $t$ agent $i\in \until{N}$
    receives new promise(s) $X_j^i[t]$ from neighbor(s) $j \in
    \NN(i)$, agent $i$ performs:}\\[-4ex]
    \begin{algorithmic}[1]
      %
      \STATE compute own control $\uteam_i(t')$ for $t' \geq t$
      using~\eqref{eq:agentsocialcontrol}
      \STATE compute own state evolution $x_i(t')$ for $t'\ge t$
      using~\eqref{eq:compute-own-evolution}
      \STATE compute first time $t^* \geq t$ such that $
      \si(\promisesets^i(t^*)) = 0 $
      \STATE schedule information request to neighbors in $\max \{ t^*
      - t, \dwellself \}$ seconds
    \end{algorithmic}
    \emph{(Respond to  information request)}
    \\
    At any time $t$ a neighbor $j \in \NN(i)$ requests information,
    agent $i$ performs:
    \begin{algorithmic}[1]
      \STATE send new promise $X_i^j[t] =
      \staterule_i(\promisesets^i[\cdot]_{[t,\infty)})$ to agent $j$ 
    \end{algorithmic}
    \emph{(Event-trigger information update)}
    \\
    At all times $t$, agent $i$ performs:
    \begin{algorithmic}[1]
      \IF {there exists $j \in \NN(i)$ such that $x_i(t) \notin
        X_i^j[\cdot](t)$}
      %
      \IF {agent $i$ has sent a promise to $j$ at some time $\time{last} \in (t - \dwellevent,
      t]$}
      \STATE send warning message WARN to agent $j$ at time $t$
      \STATE schedule to send new promise $X_i^j[\time{last}+\dwellevent] =
      \staterule_i(\promisesets^i[\cdot]_{|[\time{last}+\dwellevent,\infty)})$ to agent $j$ in $\time{last} + \dwellevent - t$ seconds
      \ELSE
      \STATE send new promise $X_i^j[t] =
      \staterule_i(\promisesets^i[\cdot]_{|[t,\infty)})$ to agent $j$ at time $t$
      \ENDIF
      \ENDIF
    \end{algorithmic}
    \emph{(Respond to warning message)}
    \\
    \parbox{\linewidth}{ At any time $t$ agent $i \in \until{N}$
      receives a warning message WARN from agent $j \in \NN(i)$ }
    \\[-4ex]
    \begin{algorithmic}[1]
      \STATE redefine promise set $X_j^i[\cdot](t') = \cup_{x_j \in
        X_j^i[\cdot](t)} \reachable_j(t'-t,x_j)$ for $t' \geq t$
    \end{algorithmic}}
  \caption{\hspace*{-.5ex}: \algosocial }\label{tab:social}
\end{algorithm}


\myclearpage
\section{Convergence of the \algosocial}\label{se:analysis}

Here we analyze the convergence properties of the \algosocial.  Our
first result establishes the monotonic evolution of the Lyapunov
function~$V$ along the network trajectories.

\begin{proposition}\label{prop:monotonicfull}
  Consider a networked cyber-physical system as described in
  Section~\ref{se:statement} executing the \algosocial
  (cf. Algorithm~\ref{tab:social}) based on a continuous controller
  $u^{**} : \prod_{j \in \until{N}} \powerset( \XX_j ) \rightarrow
  \real^{m}$ that satisfies~\eqref{eq:social-assumptions} and is
  distributed over the communication graph~$\commgraph$.  Then, the
  function $V$ is monotonically nonincreasing along any network
  trajectory.
\end{proposition}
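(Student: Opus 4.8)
The plan is to show that the time derivative of $V$ along the closed-loop network trajectory is nonpositive wherever it is defined; since $t \mapsto V(x(t))$ is continuous and only piecewise differentiable (the applied control switches between $u^{**}$ and the safe-mode controller, and changes at update events), this yields monotonicity. Differentiating $V$ along~\eqref{eq:networkdynamics} and using that $V$ is distributed over $\commgraph$ gives
\begin{align*}
  \frac{d}{dt} V(x(t)) = \sum_{i=1}^N \grad_i V(x(t)) \left( A_i x_i(t) + B_i \uteam_i(t) \right) ,
\end{align*}
where $\uteam_i$ is the control actually applied by agent $i$ according to~\eqref{eq:agentsocialcontrol}. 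It therefore suffices to prove that each summand is nonpositive for every $i \in \until{N}$ and all $t$.

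I would dispose of the safe-mode case immediately: whenever agent $i$ is in safe mode (i.e., $t > t^*$ for that agent) we have $A_i x_i(t) + B_i \safecontrol_i(x_i(t)) = 0$ by definition of the safe-mode controller, so the $i$th summand vanishes. The substantive case is when agent $i$ applies $u_i^{**}(\promisesets^i(t))$, for which I need $\grad_i V(x(t)) ( A_i x_i(t) + B_i u_i^{**}(\promisesets^i(t)) ) \le 0$.

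The heart of the argument is the \emph{promise-validity} claim: at every time $t$ and for every $i \in \until{N}$, the true local state $x^i_\NN(t) = ( x_i(t), \{ x_j(t) \}_{j \in \NN(i)} )$ belongs to the current promise tuple $\promisesets^i(t) = ( x_i(t), \{ X_j^i[\cdot](t) \}_{j \in \NN(i)} )$, where $X_i^i[\cdot](t) = \{ x_i(t) \}$. Granting this, the self-triggered design closes the argument: for $t \in [\time{last}^i, t^*]$ the trigger~\eqref{eq:socialselftrigger} guarantees $\si(\promisesets^i(t)) \le 0$, that is,
\begin{align*}
  \sup_{y_\NN \in \promisesets^i(t)} \grad_i V(y_\NN) \left( A_i y_i + B_i u_i^{**}(\promisesets^i(t)) \right) \le 0 .
\end{align*}
Since $x^i_\NN(t) \in \promisesets^i(t)$ and $\grad_i V(x(t)) = \grad_i V(x^i_\NN(t))$ by distributedness, evaluating the left-hand expression at $y_\NN = x^i_\NN(t)$ is dominated by the supremum, so the applied control's contribution is nonpositive. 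Summing over $i$ gives $\frac{d}{dt} V(x(t)) \le 0$.

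The main obstacle is establishing the promise-validity claim, which amounts to a careful accounting of the event-triggered updates in Algorithm~\ref{tab:social}. The argument I would give is inductive over the (locally finite) sequence of update events. Each agent $i$ continuously monitors the condition $x_i(t) \in X_i^j[\cdot](t)$ for each neighbor $j$; because the promise sets are closed (elements of $\powerset(\XX_j)$) and a freshly generated promise satisfies $X_i^j[t](t) = \{ x_i(t) \}$, the first instant the true state would exit a promise triggers either an immediate new promise, which restores containment since it equals the singleton $\{ x_i(t) \}$, or, inside the event dwell-time window, a WARN whose induced redefinition~\eqref{eq:promise-delay} in terms of the reachable sets $\reachable_i$ contains $x_i(t)$ by construction. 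Propagating containment across consecutive events then gives the claim for all $t$, and with it the proposition.
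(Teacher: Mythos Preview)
Your proof is correct and follows essentially the same route as the paper: decompose $\frac{d}{dt}V$ into per-agent summands, dispose of the safe-mode case trivially, and bound the $u^{**}$-case contribution by the supremum $\si(\promisesets^i(t))$, which is nonpositive by the self-trigger design. The paper's version is terser in that it passes from $\grad_i V(\availablestates^i(t))$ to $\sup_{y_\NN \in \promisesets^i(t)}\grad_i V(y_\NN)$ without explicitly justifying the inclusion $\availablestates^i(t)\in\promisesets^i(t)$; your added paragraph on promise-validity (fresh promises are singletons, WARN-redefined promises via~\eqref{eq:promise-delay} contain the true state by reachability) makes that step explicit, which is a welcome clarification rather than a different argument.
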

\begin{proof}
  We start by noting that the time evolution of $V$ under
  Algorithm~\ref{tab:social} is continuous and piecewise continuously
  differentiable. Moreover, at the time instants when the time
  derivative is well-defined, one has
  \begin{align}\label{eq:socialeventlocal}
    \frac{d}{dt} V(x(t)) & = \sum_{i=1}^N \grad_i
    V(\availablestates^i(t)) \left( A_i x_i(t) + B_i \uteam_i(t)
    \right)
    \\
    & \leq \sum_{i=1}^N \sup_{y_\NN \in \promisesets^i(t)} \grad_i
    V(y_\NN) \left( A_i x_i(t) + B_i \uteam_i(t) \right) \leq
    0. \notag
  \end{align}
  As we justify next, the last inequality follows by design of the
  \algosocial. For each $i \in \until{N}$, if $\si(\promisesets^i(t))
  \leq 0$, then $\uteam_i(t) = u_i^{**}(\promisesets^i(t))$
  (cf.~\eqref{eq:agentsocialcontrol}). In this case the corresponding
  summand of~\eqref{eq:socialeventlocal} is
  exactly~$\si(\promisesets^i(t))$, as defined
  in~\eqref{eq:upper-bound-Lie-derivative}.  If
  $\si(\promisesets^i(t)) > 0$, then $\uteam_i(t) =
  \safecontrol_i(x_i(t))$, for which the corresponding summand
  of~\eqref{eq:socialeventlocal} is exactly $0$.
\end{proof}

The next result characterizes the convergence properties of
team-triggered coordination strategies.

\begin{proposition}\label{prop:main-convergence}
  Consider a networked cyber-physical system as described in
  Section~\ref{se:statement} executing the \algosocial
  (cf. Algorithm~\ref{tab:social}) with dwell times $\dwellself,
  \dwellevent > 0$ based on a continuous controller $u^{**} : \prod_{j
    \in \until{N}} \powerset( \XX_j ) \rightarrow \real^{m}$ that
  satisfies~\eqref{eq:social-assumptions} and is distributed over the
  communication graph~$\commgraph$.  Then, any bounded network
  trajectory with uniformly bounded promises asymptotically approaches
  the desired set~$D$.
\end{proposition}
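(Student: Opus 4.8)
The plan is to prove convergence with a LaSalle-type invariance principle adapted to the hybrid, set-valued execution generated by the \algosocial (Algorithm~\ref{tab:social}). I would first regard the complete state of the closed-loop system as the triple consisting of the physical state $x$, the collection of promise functions $\{X_j^i[\cdot]\}_{i,j}$, and the scheduling timers that govern the self- and event-triggers. By Proposition~\ref{prop:monotonicfull}, $V$ is nonincreasing along the execution; since $V$ is continuous, bounded from below, and depends only on the physical state $x$ (which is continuous, as the triggers produce jumps only in the promise and timer components), the value $V(x(t))$ converges to some $V^*$. Using that the trajectory is bounded and the promises are uniformly bounded, I would argue that the execution is precompact in the relevant topology (the physical component lying in a compact set, the promise component metrized by $\functiondist$), so that its $\omega$-limit set $\Omega$ is nonempty, compact, invariant under the flow, and satisfies $V \equiv V^*$ on $\Omega$.

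The core of the argument is then to characterize $\Omega$. Fix any complete execution contained in $\Omega$. Since $V \equiv V^*$ is constant, $\frac{d}{dt} V = 0$ wherever it is defined. Writing this derivative as in the proof of Proposition~\ref{prop:monotonicfull}, it is a sum over $i \in \until{N}$ of nonpositive terms: for an agent using $u^{**}$ the $i$th term is bounded above by $\si(\promisesets^i(t)) \le 0$, because the true state tuple $\availablestates^i(t)$ lies in $\promisesets^i(t)$ and $t \le t^*$ (cf.~\eqref{eq:upper-bound-Lie-derivative} and~\eqref{eq:socialselftrigger}), while for an agent in safe mode the term is exactly $0$. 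A vanishing sum of nonpositive terms forces each term to vanish, so on $\Omega$ every agent using $u^{**}$ satisfies $\si(\promisesets^i(t)) = 0$. By the definition of the trigger~\eqref{eq:socialselftrigger} this gives $t^* = \time{last}^i$, so by~\eqref{eq:agentsocialcontrol} each agent spends all but isolated instants in safe mode; hence every $x_i$, and therefore $x$, is constant along the execution, say $x \equiv x^*$. The dwell-time schedule nonetheless guarantees that each agent receives fresh information recurrently (every $\dwellself$), and at each such instant its promise tuple collapses to the singleton $\{\availablestates^i\}$ evaluated at $x^*$; since $\si$ equals $0$ there, this yields $\grad_i V(x^*)(A_i x_i^* + B_i u^{**}_i(\{x^*\})) = 0$ for every $i$. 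Crucially, although agents update asynchronously, the constancy of $x^*$ on $\Omega$ lets me assemble these per-agent identities — each obtained at a different agent's update instant but all referring to the same configuration $x^*$ — into the single global sum of~\eqref{eq:desiredderivativesocial}. That strict inequality then forces $x^* \in D$, so $\Omega \subseteq D$ and every bounded trajectory with uniformly bounded promises approaches $D$.

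I expect the main obstacle to be not this characterization of $\Omega$, which is comparatively clean, but the functional-analytic groundwork that legitimizes the invariance principle in the first place. Because the extended state contains the promise functions, I must work in a space of set-valued continuous maps metrized by $\functiondist$, and establishing precompactness of the execution there requires more than boundedness: I would need equicontinuity of the promise functions, which should follow from the boundedness of the dynamics together with the closedness and boundedness of the control sets $\UU_i$ (so that the reachable sets, and hence the promises, expand at a bounded rate), combined with an Arzel\`a--Ascoli argument for the Hausdorff-distance-valued maps. I would also need a well-posedness statement for the hybrid execution ensuring that the dwell times $\dwellself, \dwellevent > 0$ preclude Zeno behavior and accumulation of switching instants, so that $\Omega$ is genuinely flow-invariant and a LaSalle-type principle for the resulting switched, set-valued system applies. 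Verifying the continuity of the relevant objects (the promise rules, $u^{**}$, and $\si$) with respect to $\functiondist$ and the Hausdorff distance, as posited in the hypotheses, is precisely what makes the limiting and invariance arguments rigorous.
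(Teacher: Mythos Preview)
Your proposal is correct, and the invariance-then-characterization skeleton matches the paper's, but you take a genuinely different route in two places.

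\textbf{The groundwork.} The paper does \emph{not} attempt a hybrid or continuous-time LaSalle argument in the function space of promises. Instead it performs what it calls \emph{analytic synchronization}: it merges all agents' update times into a single global schedule $\timeschedule = \cup_i \timeschedule^i$, and then views the \algosocial as a trajectory of a \emph{discrete-time set-valued} map $M: S \times \integernonnegative \rightrightarrows S \times \integernonnegative$, where $S$ is the product over $i$ of spaces of promise-function tuples. The map $M$ is built to contain, at each step, all the combinations ``agent $i$ did/did not receive information from $j$''; the paper shows $M$ is closed (because it is a finite union of continuous single-valued maps) and then applies a discrete-time set-valued LaSalle principle. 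This completely bypasses the precompactness/Arzel\`a--Ascoli and hybrid-invariance issues you flag as the main obstacle: closedness of $M$ plus boundedness of the trajectory is all that is needed. What your route buys is a more conceptually direct picture of the limiting execution; what the paper's route buys is that the ``functional-analytic groundwork'' collapses to a two-line closedness check.

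\textbf{The characterization of the limit set.} Your argument here is actually sharper than the paper's. The paper argues directly by contradiction: in $\Omega(\gamma) \subset S^*$ every agent has $\si \ge 0$, and at an update instant of agent $i$ the promise is a singleton so $\si = 0$, and then invokes~\eqref{eq:desiredderivativesocial} to produce an agent with $\si<0$. You insert the intermediate observation that on $\Omega$ every agent is in safe mode except at isolated instants (since $\si=0$ forces $t^*=\time{last}^i$), hence the physical state is constant, $x\equiv x^*$. This constancy lets you aggregate the per-agent identities $\nabla_i V(x^*)(A_ix_i^*+B_iu_i^{**}(\{x^*\}))=0$, obtained at \emph{different} update instants, at the \emph{same} configuration $x^*$, so that~\eqref{eq:desiredderivativesocial} cleanly forces $x^*\in D$. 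That step is exactly what is needed to handle the asynchrony you mention, and it is a point the paper's own argument passes over rather quickly.
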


The requirements of uniformly bounded promises in
Proposition~\ref{prop:main-convergence} means that there exists a
compact set that contains all promise sets.
Note that this is automatically guaranteed if the network state space
is compact.
Alternatively, if the sets of allowable controls are bounded, a
bounded network trajectory with expiration times for promises
implemented as outlined in Remark~\ref{re:expiration} would result in
uniformly bounded promises.  There are two main challenges in proving
Proposition~\ref{prop:main-convergence}, which we discuss next.

The first challenge is that agents operate asynchronously, i.e.,
agents receive and send information, and update their control laws
possibly at different times.
To model asynchronism, we use a procedure called analytic
synchronization, see e.g.~\cite{JL-ASM-BDOA:07b}.  Let the time
schedule of agent $i$ be given by $\timeschedule^i = \{ t_0^i, t_1^i,
\dots \}$, where $t_\ell^i$ corresponds to the $\ell$th time that
agent $i$ receives information from one or more of its neighbors (the
time schedule $\timeschedule^i$ is not known a priori by the agent).
Note that this information can be received because $i$ requests it
itself, or a neighbor sends it to $i$ because an event is triggered.
Analytic synchronization simply consists of merging together the
individual time schedules into a global time schedule $\timeschedule =
\{t_0, t_1, \dots \}$ by setting
\begin{align*}
  \timeschedule = \cup_{i=1}^N \timeschedule^i .
\end{align*}
Note that more than one agent may receive information at any given
time $t \in \timeschedule$.  This synchronization is done for analysis
purposes only. For convenience, we identify $\integernonnegative$ with
$\timeschedule$ via~$\ell \mapsto t_\ell$.

The second challenge is that a strategy resulting from the
team-triggered approach has a discontinuous dependence on the network
state and the agent promises. More precisely, the information
possessed by any given agent are trajectories of sets for each of
their neighbors, i.e., promises. For convenience, we denote by
\begin{align*}
  S & = \prod_{i=1}^N S_i, \quad \text{where}
  \\
  S_i & = \mathcal{C}^0 \Big( \real ; \powerset (\XX_1) \times \dots
  \times \powerset (\XX_{i-1}) \times \XX_i \times \powerset
  (\XX_{i+1}) \times \dots \times \powerset (\XX_{N})
  \Big) ,
\end{align*}
the space that the state of the entire network lives in. Note that
this set allows us to capture the fact that each agent $i$ has perfect
information about itself, as described in Section~\ref{se:statement}.
Although agents only have information about their neighbors, the above
space considers agents having promise information about all other
agents to facilitate the analysis.  This is only done to allow for a
simpler technical presentation, and does not impact the validity of
the arguments made here.  The information possessed by all agents of
the network at some time $t$ is collected in
\begin{align*}
  \left( X^1[\cdot]_{|[t,\infty)}, \dots, X^N[\cdot]_{|[t,\infty)}
  \right) \in S,
\end{align*}
where $ X^i[\cdot]_{|[t,\infty)} = \left( X_1^i[\cdot]_{|[t,\infty)},
  \dots, X_N^i[\cdot]_{|[t,\infty)} \right) \in S_i$.  Here, $[\cdot]$
is shorthand notation to denote the fact that promises might have been
made at different times, earlier than $t$.  The \algosocial
corresponds to a discontinuous map of the form $S \times
\integernonnegative \rightarrow S \times \integernonnegative$. This
fact makes it difficult to use standard stability methods to analyze
the convergence properties of the network. Our approach to this
problem consists of defining a discrete-time set-valued map $\algomap
: S \times \integernonnegative \rightrightarrows S \times
\integernonnegative$ whose trajectories contain the trajectories of
the \algosocial. Although this `over-approximation procedure' enlarges
the set of trajectories to consider, the gained benefit is that of
having a set-valued map with suitable continuity properties that is
amenable to set-valued stability analysis. We describe this in detail
next.

We start by defining the set-valued map~$M$.  Let $(Z, \ell) \in S
\times \integernonnegative$.  We define the $(N+1)$th component of all
the elements in $\algomap(Z,\ell)$ to be $\ell+1$.  The $i$th
component of the elements in $\algomap(Z,\ell)$ is given by one of
following possibilities.  The first possibility models the case when
agent $i$ does not receive any information from its neighbors. In this
case, the $i$th component of the elements in $\algomap(Z,\ell)$ is
simply the $i$th component of $Z$,
\begin{align}\label{eq:noupdate}
  \left( {Z_1^i}_{|[t_{\ell+1},\infty)}, \dots,
    {Z_N^i}_{|[t_{\ell+1},\infty)} \right),
\end{align}
The second possibility models the case when agent $i$ has received
information (including a WARN message) from at least one neighbor: the
$i$th component of the elements in $\algomap(Z,\ell)$ is
\begin{align}\label{eq:update}
  \left( {Y_1^i}_{|[t_{\ell+1},\infty)}, \dots,
    {Y_N^i}_{|[t_{\ell+1},\infty)} \right),
\end{align}
where each agent has access to its own state at all times,
\begin{subequations}
  \begin{align}\label{eq:Y^i_i}
    Y_i^i(t) = e^{A_i (t-t_{\ell+1})} Z_i^i(t_{\ell+1}) +
    \int_{t_{\ell+1}}^t e^{A_i (t-\tau)} B_i \uteam_i(\tau) d\tau ,
    \quad t \geq t_{\ell+1} ,
  \end{align}  
  (here, with a slight abuse of notation, we use $\uteam$ to denote
  the controller evaluated at $Y^i$) and,
  \begin{align}
    {Y_j^i}_{|[t_{\ell+1},\infty)} =
    \begin{cases}
      {Z_j^i}_{|[t_{\ell+1},\infty)} , & \text{if } i \text{ does
        not receive information from } j ,
      \\
      {W_j^i}_{|[t_{\ell+1},\infty)} , & \text{if } i \text{
        receives a warning message from } j ,
      \\
      \staterule_j({Z^j_\NN}_{|[t_{\ell+1},\infty)}) , &
      \text{otherwise} ,
    \end{cases}
  \end{align}
  for $j \neq i$, where $W_j^i(t) = \bigcup_{z_i \in
    Z^i_j(t_{\ell+1})} \gset_j^i(t,z_i)$ corresponds to the redefined
  promise~\eqref{eq:promise-delay} for $t \geq t_{\ell+1}$ as a result
  of the warning message.
\end{subequations}



We emphasize two properties of the set-valued map $M$. First, any
trajectory of the \algosocial is also a trajectory of the
non-deterministic dynamical system defined by~$\algomap$,
\begin{align*}
  (Z(t_{\ell+1}), \ell+1) \in \algomap(Z(t_\ell), \ell) .
\end{align*}
Second, although the map defined by the \algosocial is
discontinuous, the set-valued map $M$ is closed, as we show next (a
set-valued map $T: X \rightrightarrows Y$ is closed if $x_k
\rightarrow x$, $y_k \rightarrow y$ and $y_k \in T(x_k)$ imply that $y
\in T(x)$).

\begin{lemma}[Set-valued map is closed]\label{le:M-closed}
  The set-valued map~$\algomap : S \times \integernonnegative
  \rightrightarrows S \times \integernonnegative$ is closed.
\end{lemma}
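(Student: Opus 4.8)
The plan is to verify the closedness condition directly: take sequences $(Z_k,\ell_k)\to(Z,\ell)$ and $(Z'_k,m_k)\to(Z',m)$ in $S\times\integernonnegative$ with $(Z'_k,m_k)\in\algomap(Z_k,\ell_k)$, and show $(Z',m)\in\algomap(Z,\ell)$. The integer factor is immediate: since $\integernonnegative$ is discrete, $\ell_k=\ell$ and $m_k=m$ for all large $k$, and because every element of $\algomap(Z_k,\ell)$ has integer component $\ell+1$, we get $m=\ell+1$, as required of elements of $\algomap(Z,\ell)$. It remains to handle the $S$-components, where convergence is measured in the product of the metrics $\functiondist$ on the factors $S_i$.

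The key structural observation is that the non-determinism in $\algomap$ is purely combinatorial and finite: for each agent $i$ the choice is whether $i$ receives information (the update case~\eqref{eq:update}) or not~\eqref{eq:noupdate}, and, conditioned on receiving, which neighbors $j\in\NN(i)$ send a fresh promise $\staterule_j$, which send a warning (producing $W_j^i$), and which send nothing. Since $N$ and each $\NN(i)$ are finite, there are finitely many such modes, so some mode $c$ is realized by $Z'_k$ for infinitely many $k$; passing to that subsequence, each $Z'_k=F_c(Z_k)$, where $F_c:S\to S$ is the deterministic map associated with mode $c$. I would then reduce the lemma to showing that each $F_c$ is continuous with respect to $\functiondist$: granting this, $Z_k\to Z$ and $Z'_k\to Z'$ force $Z'=F_c(Z)\in\algomap(Z,\ell)$, closing the argument.

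Continuity of $F_c$ is established componentwise. The pass-through component~\eqref{eq:noupdate} is the restriction map, trivially $\functiondist$-continuous. Each freshly generated promise component $\staterule_j(Z^j_\NN)$ is continuous by the very definition of a state promise rule. Each warning component $W_j^i$ is a union of reachable sets $\reachable_j$ taken over the compact set $Z_j^i(t_{\ell+1})$ of initial states; this is $\hausdorff$-continuous because $\reachable_j(s,y)$ depends continuously, in the Hausdorff metric, on the initial state $y$ and the elapsed time $s$, so the union over a continuously varying compact set is again continuous, uniformly on compact time intervals. Here one also invokes the uniform boundedness of the promises to upgrade pointwise-in-$t$ continuity to $\functiondist$-continuity over the whole half-line.

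The delicate component, and the step I expect to be the main obstacle, is the self-evolution $Y_i^i$ of~\eqref{eq:Y^i_i}, the flow of $\dot y=A_iy+B_i\uteam_i$, which depends on $\uteam_i$ through the safe-mode switching time $t^*$ defined in~\eqref{eq:socialselftrigger} as the first zero of $t\mapsto\si(\promisesets^i(t))$. Before $t^*$ the flow uses the continuous controller $u^{**}$ and after $t^*$ the state is frozen by the safe-mode controller, so $Z\mapsto Y_i^i$ is continuous wherever $Z\mapsto t^*(Z)$ is; the difficulty is that a first-hitting time need not depend continuously on $Z$ at configurations where $\si(\promisesets^i(\cdot))$ touches zero tangentially. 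To handle this I would pass to a further subsequence along which $t^*(Z_k)$ converges to some $\bar t$ (possibly $+\infty$, i.e.\ no switch), using the a priori bound from $\dwellself$ and the uniform boundedness of the promises, identify the limiting trajectory $Z'$ as the flow that switches at $\bar t$, and then verify, via joint continuity of $\si(\promisesets^i(t))$ in $(t,Z)$, that $\bar t$ is consistent with the switching prescribed for the limit $Z$. Establishing this consistency, rather than the routine continuity of the matrix-exponential flow and of $u^{**}$, is where the genuine care is required.
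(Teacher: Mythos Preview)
Your argument matches the paper's: represent $\algomap$ as a finite union of single-valued mode maps $F_c$ and verify each is continuous componentwise (restriction, promise rule $\staterule_j$, warning via reachable sets, and self-evolution $Y_i^i$), whence the set-valued map is closed. The paper invokes exactly this principle---a set-valued map formed from finitely many continuous maps is closed---and checks the same list of components; it simply asserts that $Y_i^i$ is continuous ``from~\eqref{eq:Y^i_i}'' without engaging the switching-time issue you flag, so on that one point your treatment is actually more careful than the published proof.
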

\begin{proof}
  To show this we appeal to the fact that a set-valued map composed of
  a finite collection of continuous maps is
  closed~\cite[E1.9]{FB-JC-SM:08cor}.  Given $(Z, \ell)$, the set
  $\algomap(Z, \ell)$ is finitely comprised of all possible
  combinations of whether or not updates occur for every agent pair
  $i, j \in \until{N}$.  In the case that an agent $i$ does not
  receive any information from its neighbors, it is trivial to show
  that~\eqref{eq:noupdate} is continuous in $(Z, \ell)$ because
  ${Z^i_j}_{[t_{\ell+1},\infty)}$ is simply the restriction of
  ${Z^i_j}_{[t_\ell,\infty)}$ to the interval $[t_{\ell+1},\infty)$,
  for each $i \in \until{N}$ and $j \in \NN(i)$. In the case that an
  agent $i$ does receive updated information, the above argument still
  holds for agents $j$ that did not send information to agent~$i$.  If
  an agent $j$ sends a warning message to agent $i$,
  ${W_j^i}_{|[t_{\ell+1},\infty)}$ is continuous in $(Z, \ell)$ by
  continuity of the reachable sets on their starting point.  If an
  agent $j$ sends a new promise to agent $i$,
  ${Y_j^i}_{|[t_{\ell+1},\infty)}$ is continuous in $(Z, \ell)$ by
  definition of the function $\staterule_j$.  Finally, one can see
  that ${Y_i^i}_{|[t_{\ell+1},\infty)}$ is continuous in $(Z,\ell) $
  from~\eqref{eq:Y^i_i}.
\end{proof}

We are now ready to prove Proposition~\ref{prop:main-convergence}.

\begin{proof}[Proof of Proposition~\ref{prop:main-convergence}]
  Here we resort to the LaSalle Invariance Principle for set-valued
  discrete-time dynamical systems~\cite[Theorem
  1.21]{FB-JC-SM:08cor}. Let $ W = S \times \integernonnegative$,
  which is closed and strongly positively invariant with respect
  to~$\algomap$.  A similar argument to that in the proof of
  Proposition~\ref{prop:monotonicfull} shows that the function $V$ is
  nonincreasing along~$\algomap$.  Combining this with the fact that
  the set-valued map~$\algomap$ is closed
  (cf. Lemma~\ref{le:M-closed}), the application of the LaSalle
  Invariance Principle implies that the trajectories of~$\algomap$
  that are bounded in the first $N$ components approach the largest
  weakly positively invariant set contained in
  \begin{align}\label{eq:convergenceset}
    S^* &= \setdef{ (Z, \ell) \in S \times \integernonnegative}{
      \exists (Z', \ell+1) \in \algomap(Z, \ell) \text{ such that }
      V(Z') = V(Z) } , \notag
    \\
    &= \setdef{ (Z, \ell) \in S \times \integernonnegative}{
      \si(Z^i_\NN) \geq 0 \text{ for all } i \in \until{N} } .
  \end{align}

  We now restrict our attention to those trajectories of~$\algomap$
  that correspond to the \algosocial.  For convenience, let 
  $\loc(Z,\ell) : S \times \integernonnegative \rightarrow \XX$ be the
  map that extracts the true position information in $(Z, \ell)$,
  i.e.,
  \begin{align*}
    \loc(Z,\ell) = \left( Z_1^1(t_\ell), \dots, Z_N^N(t_\ell) \right)
    .
  \end{align*}
  Given a trajectory $\gamma$ of the \algosocial that satisfies all the
  assumptions of the statement of Proposition~\ref{prop:main-convergence}, 
  the bounded evolutions and uniformly bounded promises ensure that the
  trajectory~$\gamma$ is bounded. Then, the omega
  limit set $\Omega(\gamma)$ is weakly positively invariant and hence
  is contained in $ S^*$.
  Our objective is to show that, for any $(Z, \ell) \in
  \Omega(\gamma)$, we have $\loc(Z, \ell) \in D$. We show this
  reasoning by contradiction.  Let $(Z, \ell) \in \Omega(\gamma)$ but
  suppose $\loc(Z, \ell) \notin D$.  This means that $\si(Z^i_\NN)
  \geq 0$ for all $i \in \until{N}$. Take any agent $i$, by the
  \algoselfrequests, agent $i$ will request new information from
  neighbors in at most $\dwellself$ seconds. This means there exists a
  state $(Z', \ell+\ell') \in \Omega(\gamma)$ for which agent $i$ has
  just received updated information from its neighbors $j \in
  \NN(i)$. Since $(Z', \ell+\ell') \in S^*$, we know $\si({Z^i_\NN}')
  \geq 0$. We also know, since information was just updated, that
  ${Z_j^i}' = \loc_j(Z', \ell+\ell')$ is exact for all $j \in \NN(i)$.
  But, by~\eqref{eq:localassumptionsocial}, $\si({Z^i_\NN}') \leq 0$
  because $\loc(Z', \ell+\ell') \notin D$. This means that each time
  any agent $i$ updates its information, we must have $\si({Z^i_\NN}')
  = 0$. However, by~\eqref{eq:desiredderivativesocial}, there must
  exist at least one agent $i$ such that $\si({Z^i_\NN}') < 0$ since
  $\loc(Z',\ell+\ell') \notin D$, which yields a contradiction. Thus
  for the trajectories of the \algosocial, $(Z,\ell) \in S^*$ implies
  that $\loc(Z,\ell) \in D$.
\end{proof}

Given the convergence result of
Proposition~\ref{prop:main-convergence}, a termination condition for
the \algosocial could be included via the implementation of a
distributed algorithm that employs tokens identifying what agents are
using safe-model controllers, see e.g.,~\cite{NAL:97,DP:00}.  Also,
according to the proof of Proposition~\ref{prop:main-convergence}, the
actual value of the event-triggered dwell time~$\dwellevent$ does not
affect the convergence property of the trajectories of the constructed
discrete-time set-valued system.  However, the dwell time does affect
the rate of convergence of the actual continuous-time system (as a
larger dwell time corresponds to more time actually elapsing between
each step of the constructed discrete-time system).

\begin{remark}[Availability of a safe-mode
  controller]\label{rem:safe-mode-controller}
  {\rm The assumption on the availability of the safe-mode controller
    plays an important role in the proof of
    Proposition~\ref{prop:main-convergence} because it provides
    individual agents with a way of avoiding having a negative impact
    on the monotonic evolution of the Lyapunov function.  We believe
    this assumption can be relaxed for dynamics that allow agents to
    execute maneuvers that bring them back to their current
    state. Under such maneuvers, the Lyapunov function will not evolve
    monotonically but, at any given time, will always guarantee to
    be less than or equal to its current value at some future time. We
    have not pursued this approach here for simplicity and instead
    defer it for future work.  } \oprocend
 \end{remark}

 The next result states that, under the \algosocial with positive
 dwell times, the system does not exhibit Zeno behavior.
  
\begin{lemma}[No Zeno behavior]\label{le:zeno}
  Under the assumptions of Proposition~\ref{prop:main-convergence},
  the network executions do not exhibit Zeno behavior.
\end{lemma}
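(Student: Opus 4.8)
The plan is to rule out Zeno behavior by showing that, for every finite horizon $T > 0$, only finitely many triggering events (self-triggered information requests, promise transmissions, and WARN messages) occur on $[0,T]$; this precludes any accumulation point of event times, which is precisely what Zeno behavior would require. I would organize the count by event type and exploit the two positive dwell times $\dwellself, \dwellevent$ enforced by the \algosocial.

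First I would bound the self-triggered requests. By construction of the scheduling rule $\time{next}^i = \max\{\time{last}^i + \dwellself, t^*\}$ together with~\eqref{eq:socialselftrigger}, whenever agent $i$ schedules a request it does so at least $\dwellself$ seconds into the future, and every reception of fresh information (whether actively requested or pushed by a neighbor) only resets $\time{last}^i$ and postpones the next request by at least $\dwellself$. Hence consecutive self-triggered requests issued by a fixed agent $i$ are separated by at least $\dwellself$, so $i$ issues at most $\lfloor T/\dwellself \rfloor + 1$ requests on $[0,T]$. Each request solicits at most one promise from each neighbor, so the promises transmitted in response to requests are finite in number as well.

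Next I would bound the event-triggered traffic for an ordered neighbor pair $(i,j)$. The event-triggered dwell time guarantees that when $i$ breaks a promise to $j$ within $\dwellevent$ of the last promise it sent, the replacement promise is deferred to $\time{last} + \dwellevent$; tracing the resulting sequence shows that successive promises from $i$ to $j$ caused by broken promises are spaced by at least $\dwellevent$, giving at most $\lfloor T/\dwellevent \rfloor + 1$ of them. Since each WARN message is emitted at the instant a promise first ceases to hold and is immediately followed by such a deferred replacement, each $\dwellevent$-window contains at most one WARN for the pair $(i,j)$, so WARN messages are likewise finite in number. Finally, a reception of information never triggers an instantaneous outgoing transmission: receiving a promise only reschedules a request at least $\dwellself$ ahead, and receiving a WARN only updates the recipient's promise set via the guaranteed sets~\eqref{eq:promise-delay}. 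This rules out an instantaneous cascade of events across agents.

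Summing these bounds over the $N$ agents and the finitely many neighbor pairs shows that the total number of events on $[0,T]$ is finite for every $T$, so event times cannot accumulate and no Zeno behavior occurs. I expect the main obstacle to be the careful bookkeeping of the WARN messages and the interleaving of self- and event-triggered updates: one must argue that a single broken promise, combined with the continuously running broken-promise detector, does not spawn an unbounded burst of WARNs before the replacement promise is sent, and that the inter-agent coupling cannot chain finitely-many-per-agent events into an infinite instantaneous loop. Both issues are resolved by the positivity of $\dwellself$ and $\dwellevent$ together with the edge-triggered (threshold-crossing) reading of the broken-promise condition.
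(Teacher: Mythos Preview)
Your approach is essentially the paper's: exploit the positive dwell times $\dwellself$ and $\dwellevent$ to bound, per agent and per ordered neighbor pair, the number of communication events in any finite window, and use $|\NN(i)|<\infty$ to sum. The paper argues this more tersely (at most one request per $\dwellself$ from each agent, and at most two incoming messages from each neighbor per $\dwellevent$ window---one promise and possibly one WARN), but the logic is the same.

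One point the paper addresses that you omit: Zeno here concerns not only communication events but also discrete switches of the hybrid controller~\eqref{eq:agentsocialcontrol} between $u_i^{**}$ and $\safecontrol_i$. The paper notes that, between two successive information receptions, agent $i$ can switch modes at most twice (safe$\to$normal upon reception, then normal$\to$safe at $t^*$), so control switches are also finite once communication events are. You should add this line; otherwise a reader could object that you have bounded messages but not ruled out chattering of the control mode.
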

\begin{proof}
  Due to the self-triggered dwell time~$\dwellself$, the
  self-triggered information request steps in
  Algorithm~\ref{tab:social} guarantee that the minimum time before an
  agent $i$ asks its neighbors for new information is $\dwellself >
  0$. Similarly, due to the event-triggered dwell time~$\dwellevent$,
  agent $i$ will never receive more than two messages (one accounts
  for promise information, the other for the possibility of a WARN
  message) from a neighbor $j$ in a period of $\dwellevent > 0$
  seconds. This means that any given agent can never receive an
  infinite amount of information in finite time.
  When new information is received, the control
  law~\eqref{eq:agentsocialcontrol} can only switch a maximum of two
  times until new information is received again. Specifically, if an
  agent $i$ is using the normal control law when new information is
  received, it may switch to the safe-mode controller at most one time
  until new information is received again. If instead an agent $i$ is
  using the safe-mode control controller when new information is
  received, it may immediately switch to the normal control law, and
  then switch back to the safe-mode controller some time in the future
  before new information is received again.  The result follows from
  the fact that $|\NN(i)|$ is finite for each $i \in \until{N}$.
\end{proof}

\begin{remark}[Adaptive self-triggered dwell time]
  {\rm 
    Dwell times play an important role in preventing Zeno
    behavior. However, a constant self-triggered dwell time throughout
    the network evolution might result in wasted communication effort
    because some agents might reach a state where their effect on the
    evolution of the Lyapunov function is negligible compared to
    others. In such case, the former agents could implement larger
    dwell times, thus decreasing communication effort, without
    affecting the overall performance. Next, we give an example of
    such an adaptive dwell time scheme.
    Let $t$ be a time at which agent $i \in \until{N}$ has just
    received new information from its neighbors $\NN(i)$. Then, the
    agent sets its dwell time to
    \begin{align}\label{eq:adaptive-dwell}
      \dwellself^i(t) = \max \bigg\lbrace \delta_d \sum_{j \in \NN(i)}
        \frac{1}{| \NN(i) |} \frac{ \TwoNorm{
            u^{**}_j(\promisesets^j(t)) - \safecontrol_j(x_j(t)) ) }}{
          \TwoNorm{ u^{**}_i(\promisesets^i(t)) -
            \safecontrol_i(x_i(t)) }} , \Delta_d \bigg\rbrace,
    \end{align}
    for some a priori chosen $\delta_d$, $\Delta_d > 0$.  The
    intuition behind this design is the following.  The value
    $\TwoNorm{ u^{**}_j(\promisesets^j(t)) - \safecontrol_j(x_j(t)) )
    }$ can be interpreted as a measure of how far agent $j$ is from
    reaching a point where it cannot no longer contribute positively
    to the global task.  As agents are nearing this point, they are
    more inclined to use the safe mode control to stay put and hence
    do not require fresh information. Therefore, if agent $i$ is close
    to this point but its neighbors are not,~\eqref{eq:adaptive-dwell}
    sets a larger self-triggered dwell time to avoid excessive
    requests for information.  Conversely, if agent $i$ is far from
    this point but its neighbors are not,~\eqref{eq:adaptive-dwell}
    sets a small dwell time to let the self-triggered request
    mechanism be the driving factor in determining when new
    information is needed.  For agent $i$ to implement this, in
    addition to current state information and promises, each neighbor
    $j \in \NN(i)$ also needs to send the value of $\TwoNorm{
      u^{**}_j(\promisesets^j(t)) - \safecontrol_j(x_j(t)) ) }$ at
    time $t$.
    In the case that information is not received from all neighbors,
    agent $i$ simply uses the last computed dwell time.
    Section~\ref{se:simulations} illustrates this adaptive scheme in
    simulation. \oprocend}
\end{remark}

\section{Robustness against unreliable
  communication}\label{se:robustness}
  
This section studies the robustness of the team-triggered approach in
scenarios with packet drops, delays, and communication noise. We start
by introducing the possibility of packet drops in the network. For any
given message an agent sends to another agent, assume there is an
unknown probability $0 \le p < 1$ that the packet is dropped, and the
message is never received. We also consider an unknown (possibly
time-varying) communication delay $\delay(t) \leq \maxdelay$ in the
network for all $t$ where $\maxdelay \geq 0$ is known. In other words,
if agent $j$ sends agent $i$ a message at time $t$, agent $i$ will not
receive it with probability $p$ or receive it at time $t + \delay(t)$
with probability $1-p$. We assume that small messages (i.e., 1-bit
messages) can be sent reliably with negligible delay.  This assumption
is similar to the ``acknowledgments'' and ``permission'' messages used
in other works, see~\cite{XW-MDL:11,MG-DL-JSM-SD-KHJ:12} and
references therein.  Lastly, we also account for the possibility of
communication noise or quantization. We assume that messages among
agents are corrupted with an error which is upper bounded by some
$\maxnoise \geq 0$ known to the agents.

With this model, the \algosocial as described in
Algorithm~\ref{tab:social} does not guarantee convergence because the
monotonic behavior of the Lyapunov function no longer holds. The
problem occurs when an agent $j$ breaks a promise to agent $i$ at some
time $t$. If this occurs, agent $i$ will operate with invalid
information (due to the sources of error described above) and compute
$\si(\promisesets^i(t'))$ (as defined
in~\eqref{eq:upper-bound-Lie-derivative}) incorrectly for $t' \geq t$.

Next, we discuss how the \algosocial can be modified in scenarios with
unreliable communication.  To deal with communication noise, when an
agent $i$ receives an estimated promise $\hat{X}_j^i$ from another
agent $j$, it must be able to create a promise set $X_j^i$ that
contains the actual promise that agent $j$ intended to send.
We refer to this action as making a promise set valid. The following
example shows how it can be done for the promises described in
Remark~\ref{re:control-promises-rules}.

\begin{example}\longthmtitle{Ball-radius promise rule with
    communication noise} {\rm In the scenario with bounded
    communication noise, agent $j$ sends the control promise conveyed
    through $x_j(t), u_j(\promisesets^j(t)),$ and $\delta_j$, to agent
    $i$ at time $t$ as defined in
    Remark~\ref{re:control-promises-rules}, but $i$ receives instead
    $\hat{x}_j(t), \hat{u}_j(\promisesets^j(t)),$ and
    $\hat{\delta_j}$, where it knows that $\TwoNorm{x_j(t) -
      \hat{x}_j(t)} \leq \maxnoise$, $\TwoNorm{u_j(\promisesets^j(t))
      - \hat{u_j} (\promisesets^j(t))} \leq \maxnoise,$ and $|\delta_j
    - \hat{\delta}_j| \leq \bar{\delta}$, given that $\maxnoise$ and
    $\bar{\delta}$ are known a priori.  To ensure that the promise
    agent $i$ operates with about agent $j$ contains the true promise
    made by $j$, agent $i$ can set
    \begin{align*}
      U_j^i[t](t') = \cball{ \hat{u}_j^i(\promisesets^j(t))
      }{\hat{\delta}_j + \maxnoise + \bar{\delta}} \cap \UU_j \qquad
      t' \geq t .
    \end{align*}
    To create the state promise from this, $i$ would need the
    true state $x_j(t)$ of $j$ at time $t$. However, since only the
    estimate $\hat{x}_j^i(t)$ is available, 
    we modify~\eqref{eq:promise} by
    \begin{multline}\label{eq:promise-noise}
      X_j^i[t](t') = \cup_{y_j \in \cball{\hat{x}_j^i(t)}{\maxnoise} }
      \{ z \in \XX_j \; | \; \exists \, \map{u_j}{ [t, t'] }{\UU_j}
      \text{ with } u_j(s) \in U_j^i[t](s) \text{ for } s \in [t,t']
      \\
      \text{ such that } z = e^{A_j (t'-t)} y_j + \int_{t}^{t'}
      e^{A_j (t'-\tau)} B_j u_j(\tau) d\tau \} .  \eqoprocend
    \end{multline}
  }
\end{example}

We deal with the packet drops and communication delays with warning
messages similar to the ones introduced in
Section~\ref{se:brokenpromises}. 
Let an agent $j$ break its promise to agent $i$ at time $t$, then
agent $j$ sends~$i$ a new promise set $X_j^i[t]$ for $t' \geq t$ and
warning message WARN.  Since agent $i$ only receives WARN at time $t$,
the promise set $X_j^i[t]$ may not be available to agent $i$ for $t'
\geq t$. If the packet is dropped, then the message never comes
through, if the packet is successfully transmitted, then
$X_j^i[t](t')$ is only available for $t' \geq t + \delay(t)$. In
either case, we need a promise set $X_j^i[\cdot](t')$ for $t' \geq t$
that is guaranteed to contain $x_j(t')$. We do this by redefining the
promise using the reachable set, similarly
to~\eqref{eq:promise-delay}.  Note that this does not require the
agents to have a synchronized global clock, as the times $t'$ and $t$
are both monitored by the receiving agent $i$. In other words, it is
not necessary for the message sent by agent $j$ to be timestamped.  By
definition of reachable set, the promise $X_j^i[\cdot](t')$ is
guaranteed to contain $x_j(t')$ for $t' \geq t$.  If at time $t +
\maxdelay$, agent $i$ has still not received the promise $X_j^i[t]$
from~$j$, it can send agent $j$ a request REQ for a new message at
which point~$j$ would send~$i$ a new promise $X_j^i[t+\maxdelay]$.
Note that WARN is not sent in this case because the message was
requested from $j$ by~$i$ and not a cause of $j$ breaking a promise to
$i$. The \algosocialdelays, formally presented in
Algorithm~\ref{tab:socialdelays}, ensures the monotonic evolution of
the Lyapunov function $V$ even in the presence of packet drops,
communication delays, and communication noise.

\begin{algorithm}[htb!]
  {\footnotesize \vspace*{1ex} 
    \emph{(Self-trigger information update)}
    \\
    \parbox{\linewidth}{At any time $t$ agent $i \in \until{N}$
      receives new promise(s) $\hat{X}_j^i[t]$ from neighbor(s) $j \in
      \NN(i)$, agent $i$ performs:
    }\\[-4ex]
    \begin{algorithmic}[1]
      \STATE create valid promise $X_j^i[t]$ with respect to
      $\maxnoise$
      \STATE compute own control $\uteam_i(t')$ for $t' \geq t$
      using~\eqref{eq:agentsocialcontrol}
      \STATE compute own state evolution $x_i(t')$ for $t' \geq t$
      using~\eqref{eq:compute-own-evolution}
      \STATE compute first time $t^* \geq t$ such that $
      \si(\promisesets^i(t^*)) = 0 $
      \STATE schedule information request to neighbors in $\max \{ t^*
      - t, \dwellself \}$ seconds
      \WHILE {message from $j$ has not been received}
      \IF {current time equals $t + \max \{ t^* - t, \dwellself \} + k
        \maxdelay$ for $k \in \integernonnegative$} 
      \STATE send agent $j$ a request REQ for new information
      \ENDIF
      \ENDWHILE
    \end{algorithmic}
    \emph{(Respond to information request)}
    \\
    At any time $t$ a neighbor $j \in \NN(i)$
      requests information, agent $i$ performs:
    \begin{algorithmic}[1]
      \STATE send new promise $Y_i^j[t] =
      \staterule_i(\promisesets^i[\cdot]_{|[t,\infty)})$ to agent $j$ 
    \end{algorithmic}
    \emph{(Event-trigger information update)}
    \\
    At all times $t$, agent $i$ performs:
    \begin{algorithmic}[1]
      \IF {there exists $j \in \NN(i)$ such that $x_i(t) \notin
        Y_i^j[\cdot](t)$}
      \STATE send warning message WARN to agent $j$
      \IF {agent $i$ has sent a promise to $j$ at some time
        $\time{last} \in (t - \dwellevent, t]$}
      \STATE schedule to send new promise $Y_i^j[\time{last}+\dwellevent] =
      \staterule_i(\promisesets^i[\cdot]_{|[\time{last}+\dwellevent,\infty)})$
      to agent $j$ in $\time{last} + \dwellevent - t$ seconds
      \ELSE
      \STATE send new promise $Y_i^j[t] =
      \staterule_i(\promisesets^i[\cdot]_{|[t,\infty)})$ to agent $j$
      \ENDIF
      \ENDIF
    \end{algorithmic}
    \emph{(Respond to warning message)}
    \\
    \parbox{\linewidth}{ At any time $t$ agent $i \in \until{N}$
      receives a warning message WARN from agent $j \in \NN(i)$ }
    \\[-4ex]
    \begin{algorithmic}[1]
      \STATE redefine promise set $X_j^i[\cdot](t') = \cup_{x_j^0 \in
        X_j^i[\cdot](t)} \reachable_j(t'-t,x_j^0)$ for $t' \geq t$
      \WHILE {message from $j$ has not been received}
      \IF {current time equals $t + k \maxdelay$ for $k \in
        \integernonnegative$} 
      \STATE send agent $j$ a request REQ for new information
      \ENDIF
      \ENDWHILE
    \end{algorithmic}}
  \caption{\hspace*{-.5ex}: \algosocialdelays }\label{tab:socialdelays}
\end{algorithm}

The next result establishes the asymptotic correctness guarantees on
the \algosocialdelays. In the presence of communication noise or
delays, convergence can be guaranteed only to a set that contains the
desired set~$D$.

\begin{corollary}\label{cor:unreliable-convergence}
  Consider a networked cyber-physical system as described in
  Section~\ref{se:statement} with packet drops occurring with some
  unknown probability $0 \leq p < 1$, messages being delayed by some
  known maximum delay $\maxdelay$, and communication noise bounded
  by~$\maxnoise$, executing the \algosocialdelays
  (cf. Algorithm~\ref{tab:socialdelays}) with dwell times $\dwellself,
  \dwellevent > 0$ based on a continuous controller $u^{**} : \prod_{j
    \in \until{N}} \powerset( \XX_j ) \rightarrow \real^{m}$ that
  satisfies~\eqref{eq:social-assumptions} and is distributed over the
  communication graph~$\commgraph$.  Let
  \begin{align}\label{eq:robustconvergenceset}
    D'(\maxdelay,\maxnoise) = \setdef{x \in \XX}{ \inf_{
        {\availablestates^i}' \in \cball{
          \availablestates^i}{\maxnoise} } \si \Big( \{ x_i \} \times
        \prod_{j \in \NN(i)} \cup_{y_j \in \cball{ {x^i_j}'
          }{\maxnoise}} \reachable_j(\maxdelay, y_j)
      \Big) \geq 0 \\
      \text{ for all } i \in \until{N} } , \notag
  \end{align}
  Then, any bounded network trajectory with uniformly bounded promises
  asymptotically converges to $D'(\maxdelay,\maxnoise) \supset D$ with
  probability 1.
\end{corollary}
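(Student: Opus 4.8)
The plan is to mirror the proof of Proposition~\ref{prop:main-convergence} step by step, adapting the set-valued over-approximation and the characterization of the limit set to absorb the three error sources, and then treating the stochastic packet drops separately to obtain the almost-sure conclusion. First I would re-establish the monotonicity of $V$ in the spirit of Proposition~\ref{prop:monotonicfull}. The invariant to verify is that, under the \algosocialdelays, the promise set $X_j^i[\cdot](t')$ with which agent $i$ operates is guaranteed to contain the true state $x_j(t')$ at all times, despite noise, delay, and drops. This follows from the three defensive mechanisms built into Algorithm~\ref{tab:socialdelays}: (i) the make-valid step inflates a noisy received promise by $\maxnoise$ (and $\bar{\delta}$) as in~\eqref{eq:promise-noise}, so the operated promise contains the intended one; (ii) upon a WARN, or while a promised update is outstanding, the promise is redefined through reachable sets as in~\eqref{eq:promise-delay}, which by construction contain $x_j(t')$; and (iii) the REQ loop, firing every $\maxdelay$ seconds, guarantees the agent is never left without a valid (reachable-set) promise while waiting for a delayed or dropped message. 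Given containment, the implication $\si(\promisesets^i(t)) \le 0 \Rightarrow \frac{d}{dt} V \le 0$ goes through exactly as in~\eqref{eq:socialeventlocal}, with the safe-mode controller handling the case $\si > 0$; hence $V$ is monotonically nonincreasing along every sample path.

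Next I would build an over-approximating set-valued map $M'$ on $S \times \integernonnegative$ analogous to $M$, now incorporating the $\maxnoise$-inflation and the reachable-set redefinitions, so that every execution of the \algosocialdelays is a trajectory of $M'$. Closedness of $M'$ would follow exactly as in Lemma~\ref{le:M-closed}, since the extra operations---intersection with $\maxnoise$-balls and unions of reachable sets---are continuous with respect to the Hausdorff and $\functiondist$ metrics. With uniformly bounded promises and bounded trajectories ensuring boundedness, the set-valued LaSalle Invariance Principle~\cite[Theorem 1.21]{FB-JC-SM:08cor} then applies, so bounded executions approach the largest weakly positively invariant set on which $V$ no longer decreases.

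The contradiction step of Proposition~\ref{prop:main-convergence} is where the limit set changes. When an agent $i$ receives information it no longer recovers exact states: its own state is known only within $\maxnoise$, and each neighbor's state is known within $\maxnoise$ and must be inflated by $\reachable_j(\maxdelay, \cdot)$ to cover the worst-case delay. Consequently, the strongest guarantee available at an update is precisely that the infimum of $\si$ over these inflated, noisy sets is nonnegative---which is exactly the defining condition of $D'(\maxdelay,\maxnoise)$ in~\eqref{eq:robustconvergenceset}. Hence if $\loc(Z,\ell) \notin D'$, some agent still achieves a strict decrease of $V$ at its next update, so such a point cannot lie in the limit set, yielding convergence to $D'$. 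The almost-sure qualifier then comes from the REQ loop: each (re)transmission is dropped independently with probability $p < 1$, so the probability that a required message is never delivered is $\lim_{k \to \infty} p^k = 0$; thus with probability $1$ every agent obtains fresh information after finitely many attempts, and the bounded-inter-update argument used above holds.

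The hard part will be pinning down that the worst-case uncertainty faced at an update---aggregating the $\maxnoise$ error on the agent's own state, the $\maxnoise$ error on each neighbor, and the up-to-$\maxdelay$ reachable-set inflation---collapses exactly into the set appearing in~\eqref{eq:robustconvergenceset}, and that the reachable-set redefinitions genuinely prevent the true state from escaping the maintained promise across repeated drops and delays. Once containment and this aggregation are verified, the closedness, LaSalle, and probability-$1$ steps are routine adaptations of the noiseless case.
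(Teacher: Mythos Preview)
Your proposal is correct and follows essentially the same route as the paper: re-establish monotonicity via the containment invariant on promises, reuse the set-valued LaSalle machinery from Proposition~\ref{prop:main-convergence} (the paper does not actually build a new $M'$ but simply observes that the properties of $M$ carry over once the time schedules $\timeschedule^i$ are almost surely unbounded), characterize the limit set through the inflated promise sets available at update times, and obtain the almost-sure conclusion from $p<1$. One small correction: each agent has exact access to its \emph{own} state at all times (cf.\ Section~\ref{se:statement}), so the $\maxnoise$-inflation and $\maxdelay$-reachability padding apply only to the information received about neighbors---this is why the first factor in~\eqref{eq:robustconvergenceset} is the singleton $\{x_i\}$, and you should adjust your aggregation argument accordingly.
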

\begin{proof}
  We begin by noting that by
  equation~\eqref{eq:desiredderivativesocial}, the
  definition~\eqref{eq:upper-bound-Lie-derivative}, and the continuity
  of $u^{**}$, $D$ can be written as
  \begin{align*}
    D'(0,0) = \setdef{ x \in \XX }{ \sum_{i=1}^N \grad_i V(x) (A_i x_i
      + B_i u_i^{**}( \{ \availablestates^i \} )) \geq 0}.
  \end{align*}
  One can see that $D \subset D'(\maxdelay,\maxnoise)$ by noticing
  that, for any $x \in D$, $\maxnoise, \maxdelay \geq 0$, no matter
  which point ${\availablestates^i}' \in
  \cball{\availablestates^i}{\maxnoise}$ is taken, one has
  $\availablestates^i \in \{ x_i \} \times \prod_{j \in \NN(i)}
  \cup_{y_j \in \cball{ {x^i_j}' }{\maxnoise}} \reachable_j(\maxdelay,
  y_j)$.  To show that the bounded trajectories of the
  \algosocialdelays converge to~$D'$, we begin by noting that all
  properties of~$\algomap$ used in the proof of
  Proposition~\ref{prop:main-convergence} still hold in the presence
  of packet drops, delays, and communication noise as long as the time
  schedule~$\timeschedule^i$ is unbounded for each agent~$i \in
  \until{N}$.  In order for the time schedule $\timeschedule^i$ to be
  unbounded, each agent $i$ must receive an infinite number of
  messages, and $t_{\ell}^i \rightarrow \infty$.  Since packet drops
  have probability $0 \leq p < 1$, the probability that there is a
  finite number of updates for any given agent $i$ over an infinite
  time horizon is~$0$. Thus, with probability~$1$, there are an
  infinite number of information updates for each agent.  Using a
  similar argument to that of Lemma~\ref{le:zeno}, one can show that
  the positive dwell times $\dwellself, \dwellevent > 0$ ensure that
  Zeno behavior does not occur, meaning that $t_{\ell}^i \rightarrow
  \infty$. Then, by the analysis in the proof of
  Proposition~\ref{prop:main-convergence}, the bounded trajectories
  of~$\algomap$ still converge to $S^*$ as defined
  in~\eqref{eq:convergenceset}.
  
  For a bounded evolution $\gamma$ of the \algosocialdelays, we have
  that $\Omega(\gamma) \subset S^*$ is weakly positively invariant.
  Note that, since agents may never have exact information about their
  neighbors, we can no longer leverage
  properties~\eqref{eq:localassumptionsocial}
  and~\eqref{eq:desiredderivativesocial} to precisely
  characterize~$\Omega(\gamma)$.  We now show that for any $(Z,\ell)
  \in \Omega(\gamma)$, we have $\loc(Z,\ell) \in D'$.  Let $(Z,\ell)
  \in \Omega(\gamma)$. This means that $\si(Z^i_\NN) \geq 0$ for all
  $i \in \until{N}$. Take any agent $i$, by the \algosocialdelays,
  agent $i$ will request new information from neighbors in at most
  $\dwellself$ seconds.  This means there exists a state
  $(Z',\ell+\ell') \in \Omega(\gamma)$ for which agent $i$ has just
  received updated, possibly delayed, information from its neighbors
  $j \in \NN(i)$.  Since $(Z',\ell+\ell') \in S^*$, we know
  $\si({Z^i_\NN}') \geq 0$. We also know, since information was just
  updated, that ${Z_\NN^i}' \subset \{ {Z_i^i}' \} \times
    \prod_{j\in \NN(i)} \cup_{ y_j \in \cball{{z_j^i}'}{\maxnoise} }
    \reachable(\maxdelay, y_j)$.
 Since $(Z', \ell + \ell') \in S^*$,
  we know that $\si( {Z_\NN^i}' ) \geq 0$, for all $i \in
  \until{N}$. This means that $\loc(Z', \ell+\ell') \subset D'$, 
  thus $\loc(Z, \ell) \in S^* \subset D'$.
\end{proof}

From the proof of Corollary~\ref{cor:unreliable-convergence}, one can
see that the modifications made to the \algosocialdelays make the
omega limit sets of its trajectories larger than those of the
\algosocial, resulting in $D \subset D'$.  The set~$D'$ depends on the
Lyapunov function~$V$. However, the difference between
$D'(\maxdelay,\maxnoise)$ and~$D$ vanishes as $\maxnoise$ and
$\maxdelay$ vanish.

\myclearpage
\section{Simulations}\label{se:simulations}

In this section we present simulations of coordination strategies
derived from the team- and self-triggered approaches in a planar
multi-agent formation control problem.  Our starting point is the
distributed coordination algorithm based on graph rigidity analyzed
in~\cite{LK-MEB-BF:08,FD-BF:10} which makes the desired network
formation locally (but not globally) asymptotically stable. In this
regard, the state space $\XX$ of Section~\ref{se:statement}
corresponds to the domain of attraction of the desired equilibria and,
as long as the network trajectories do not leave this set, the
convergence results still hold.  The local convergence result of the
team-triggered approach here is only an artifact of the specific
example and, in fact, if the
assumptions~\eqref{eq:derivativeassumption} are satisfied globally,
then the system is globally asymptotically stabilized.  The interested
reader is referred to~\cite{CN-JC:13-sv} for a similar study in a
optimal networked deployment problem where the assumptions hold
globally.

Consider $4$ agents communicating over a graph which is only missing
the edge $(1,3)$ from the complete graph. The agents seek to attain a
rectangle formation of side lengths $1$ and $2$.  Each agent has
unicycle dynamics,
\begin{align*}
  \dot{x}_i &= u_i 
    \begin{bmatrix}
      \cos \theta_i
      \\
      \sin \theta_i 
    \end{bmatrix}
    \\
  \dot{\theta}_i &= v_i,
\end{align*}
where $0 \leq u_i \leq \umax = 5$ and $|v_i| \leq \vmax = 3$ are the
control inputs. The safe-mode controller is then simply
$(\safecontrol_i,v_i^\text{sf}) \equiv 0$.  The distributed control
law is defined as follows. Each agent computes a goal point
\begin{align*}\label{eq:graph-rigidity-law}
  p_i^*(x) & = x_i + \sum_{j \in \NN(i)} \left( \TwoNorm{ x_j - x_i } -
    d_{ij} \right) \unit{ x_j - x_i } ,
\end{align*}
where $d_{ij}$ is the pre-specified desired distance between agents
$i$ and $j$, and $\unit{x_j - x_i}$ denotes the unit vector in the
direction of $x_j - x_i$. Then, the control law is given by
\begin{align*}
  u_i^* &= \max \left\lbrace \min \{ k[\cos \theta_i \hspace*{2 mm}
    \sin \theta_i ]^T \cdot (p_i^*(x) - x_i), \umax \} , 0
  \right\rbrace,
  \\
  v_i^* &= \max \left\lbrace \min \{ k(\angle (p_i^*(x) - x_i) -
    \theta_i) , \vmax \} , -\vmax \right\rbrace ,
\end{align*}
where $k > 0$ is a design parameter. For our simulations we set
$k = 150$. This continuous control law
essentially ensures that the position $x_i$ moves towards $p_i^*(x)$
when possible while the unicycle rotates its orientation towards this
goal.  This control law ensures that $V: \left( \real^{2} \right)^N
\rightarrow \realnonnegative$ given~by
\begin{align*}
  V(x) = \frac{1}{2} \sum_{(i,j) \in E} \left( \TwoNorm{ x_j - x_i }^2
    - d_{ij}^2 \right)^2 ,
\end{align*}
is a nonincreasing function for the closed-loop system to establish the
asymptotic convergence to the desired formation.  For the
team-triggered approach, we use both static and dynamic ball-radius
promise rules. The controller~$\uteam$ is then defined
by~\eqref{eq:agentsocialcontrol}, where controller~$u^{**}$ is given
by~\eqref{eq:examplecontrol} as described in
Example~\ref{ex:estimate}. Note that although the agent has no forward
velocity when using the safe controller, it will still rotate in
place.  The initial conditions are $x_1(0) = (6,10)^T$, $x_2(0) =
(7,3)^T$, $x_3(0) = (14,8)^T$, and $x_4(0) = (7,13)^T$ and
$\theta_i(0) = \pi/2$ for all $i$. We begin by simulating the
team-triggered approach using fixed dwell times of $\dwellself = 0.3$
and $\dwellevent = 0.003$ and the static ball-radius promise of
Remark~\ref{re:control-promises-rules} with the same radius $\delta =
1$ for all agents. Figure~\ref{fig:trajectories} shows the
trajectories of the \algosocial.

{ \psfrag{onetwothree1}[cc][cc]{\hspace*{-3 mm} \footnotesize Agent
    1}%
  \psfrag{onetwothree2}[cc][cc]{\hspace*{-3 mm} \footnotesize Agent
    2}%
  \psfrag{onetwothree3}[cc][cc]{\hspace*{-3 mm} \footnotesize Agent
    3}%
  \psfrag{onetwothree4}[cc][cc]{\hspace*{-3 mm} \footnotesize Agent
    4}%
  \psfrag{onetwothreefourfivesix1}[cc][cc]{\hspace*{-3.5 mm}
    \footnotesize Team-triggered law}%
  \psfrag{onetwothreefourfivesix2}[cc][cc]{\hspace*{-8 mm}
    \footnotesize Self-triggered law}%
  \begin{figure}[htb]
    \centering
    \includegraphics[width=.45\linewidth]{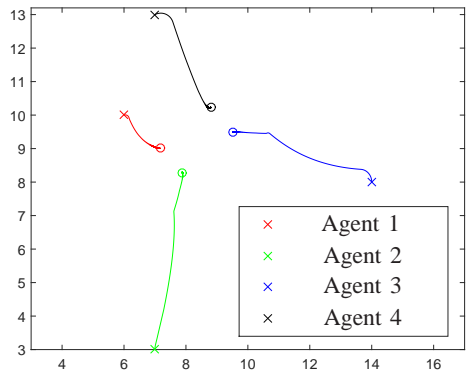}
    \caption{Trajectories of an execution of the \algosocial with
      fixed dwell times and promises. The initial and final
      condition of each agent is denoted by an `x' and an `o',
      respectively.}\label{fig:trajectories}
    \vspace{-1ex}
  \end{figure}
}

To compare the team- and self-triggered approaches, we denote by
$N_S^i$ the number of times~$i$ has requested new information (and
thus has received a message from each one of its neighbors) and by
$N_E^i$ the number of messages $i$ has sent to a neighboring agent
because it decided to break its promise. The total number of messages
for an execution is $N_\text{comm} = \sum_{i=1}^4 |\NN(i)| N_S^i +
N_E^i$.  Figure~\ref{fig:social}
compares 
the number of required communications in both approaches. Remarkably,
for this specific example, the team-triggered approach outperforms the
self-triggered approach in terms of required communication without
sacrificing any performance in terms of time to convergence (the
latter is depicted through the evolution of the Lyapunov function in
Figure~\ref{fig:social-adaptive}(b) below).  Less overall
communication has an important impact on reducing network load. In
Figure~\ref{fig:social}(a), we see that very quickly all agents are
requesting information as often as they can (as restricted by the
self-triggered dwell time), due to the conservative nature of the
self-triggered time computations. In the execution of the \algosocial
in Figure~\ref{fig:social}(b), we see that the agents are requesting
information from one another less frequently.
Figure~\ref{fig:social}(c) shows that agents were required to break a
few promises early on in the execution.
{ \psfrag{onetwo1}[cc][cc]{\hspace*{0 mm}
    \tiny Agent 1}%
  \psfrag{onetwo2}[cc][cc]{\hspace*{0 mm}
    \tiny Agent 2}%
  \psfrag{onetwo3}[cc][cc]{\hspace*{0 mm}
    \tiny Agent 3}%
  \psfrag{onetwo4}[cc][cc]{\hspace*{0 mm}
    \tiny Agent 4}%
   \psfrag{0}[cc][cc]{}%
   \psfrag{5}[cc][cc]{\scriptsize 5}
   \psfrag{10}[cc][cc]{\hspace*{-1 mm}\scriptsize 10}
   \psfrag{15}[cc][cc]{\hspace*{-0 mm}\scriptsize 15}
   \psfrag{20}[cc][cc]{\hspace*{-1 mm}\scriptsize 20}
   \psfrag{25}[cc][cc]{\hspace*{-0 mm}\scriptsize 25}
   \psfrag{30}[cc][cc]{\hspace*{-1 mm}\scriptsize 30}
   \psfrag{35}[cc][cc]{}
   \psfrag{35}[cc][cc]{\hspace*{-2 mm}\scriptsize 35}
   \psfrag{40}[cc][cc]{\hspace*{-1 mm}\scriptsize 40}
   \psfrag{45}[cc][cc]{\hspace*{-2 mm}\scriptsize 45}
   \psfrag{50}[cc][cc]{\hspace*{-1 mm}\scriptsize 50}
   \psfrag{55}[cc][cc]{}
   \psfrag{60}[cc][cc]{\hspace*{-1 mm}\scriptsize 60}
   \psfrag{65}[cc][cc]{}
   \psfrag{70}[cc][cc]{\hspace*{-1 mm}\scriptsize 70}
   \psfrag{80}[cc][cc]{\hspace*{-1 mm}\scriptsize 80}
   \psfrag{90}[cc][cc]{\hspace*{-1 mm}\scriptsize 90}
   \psfrag{100}[cc][cc]{\hspace*{-2 mm}\scriptsize 100}
   \psfrag{0.5}[cc][cc]{\hspace*{-1 mm}\scriptsize 0.5}
   \psfrag{1}[cc][cc]{\scriptsize 1}
   \psfrag{1.5}[cc][cc]{\hspace*{-1 mm}\scriptsize 1.5}
   \psfrag{2}[cc][cc]{\scriptsize 2}
   \psfrag{2.5}[cc][cc]{\hspace*{-1 mm}\scriptsize 2.5}
   \psfrag{3.5}[cc][cc]{\hspace*{-1 mm}\scriptsize 3.5}
   \psfrag{3}[cc][cc]{\scriptsize 3}
   \psfrag{4}[cc][cc]{\scriptsize 4}
   \psfrag{6}[cc][cc]{\scriptsize 6}
   \psfrag{7}[cc][cc]{\scriptsize 7}
   \psfrag{8}[cc][cc]{\scriptsize 8}
   \psfrag{9}[cc][cc]{\scriptsize 9}
   \psfrag{0}[cc][cc]{\scriptsize 0}
  \begin{figure}[htb]
    \centering
    \subfigure[]{\includegraphics[width=.32\linewidth]{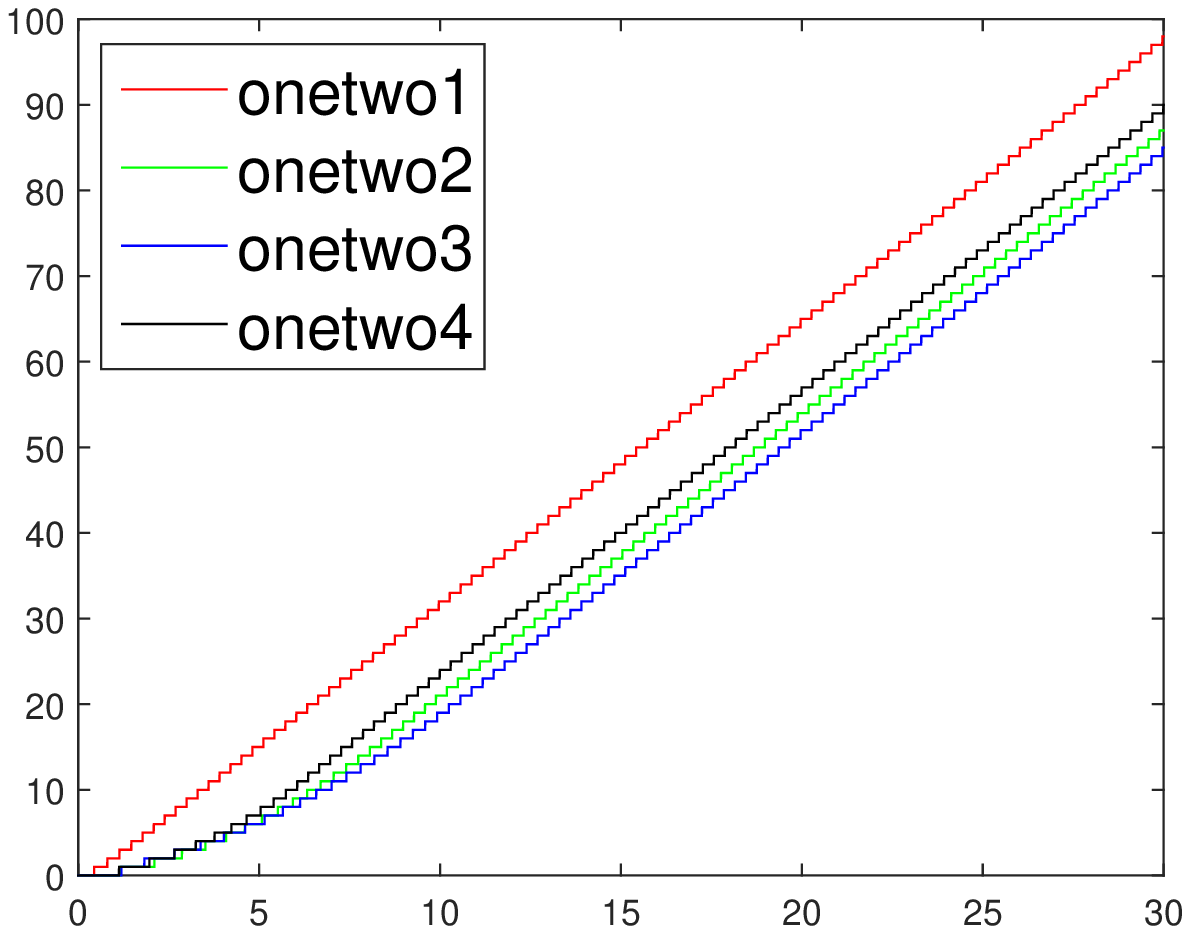}}
    \put(-158,65){\footnotesize $N_S$} \put(-80,-5){{\footnotesize
        Time}}
    \;
    \subfigure[]{\includegraphics[width=.32\linewidth]{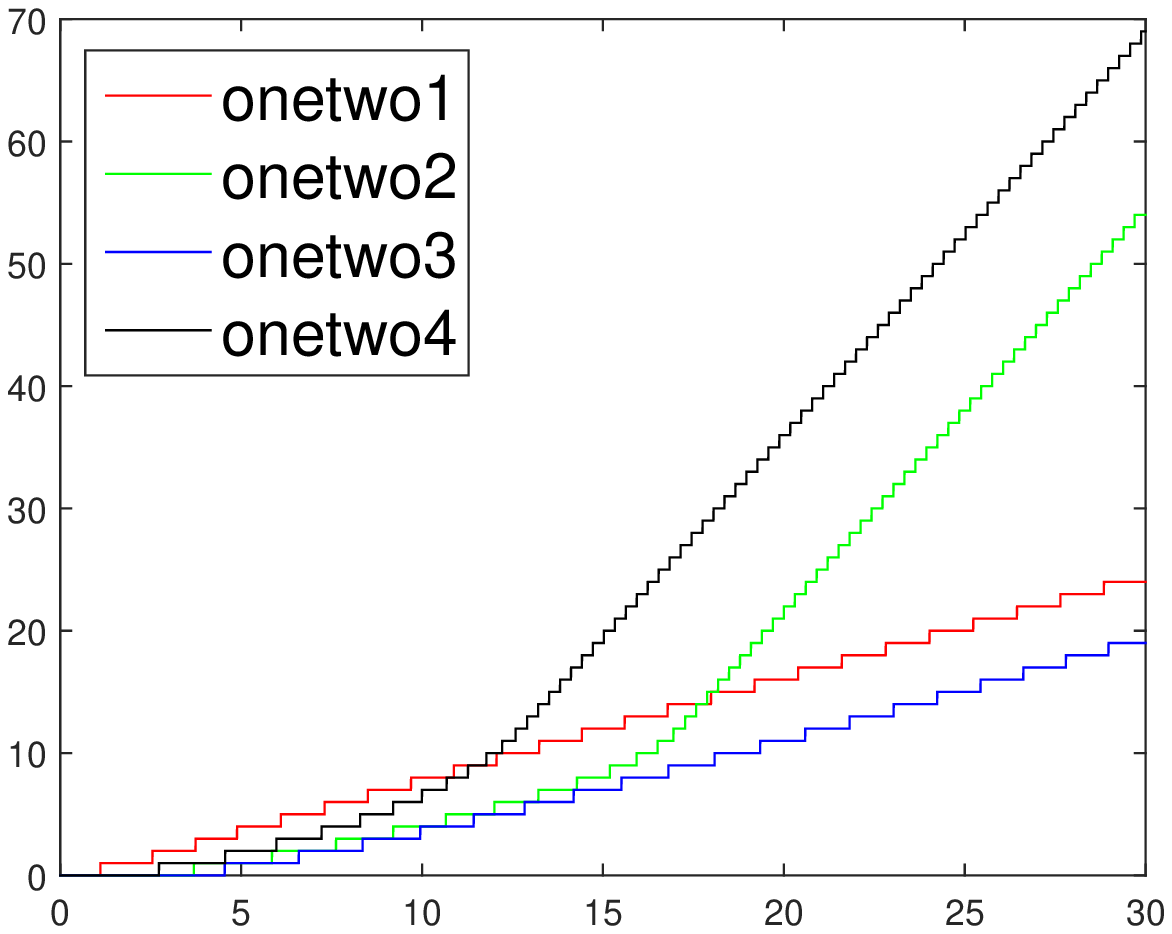}}
    \put(-158,65){\footnotesize $N_S$} \put(-80,-5){{\footnotesize
        Time}} 
    \;
    \subfigure[]{\includegraphics[width=.32\linewidth]{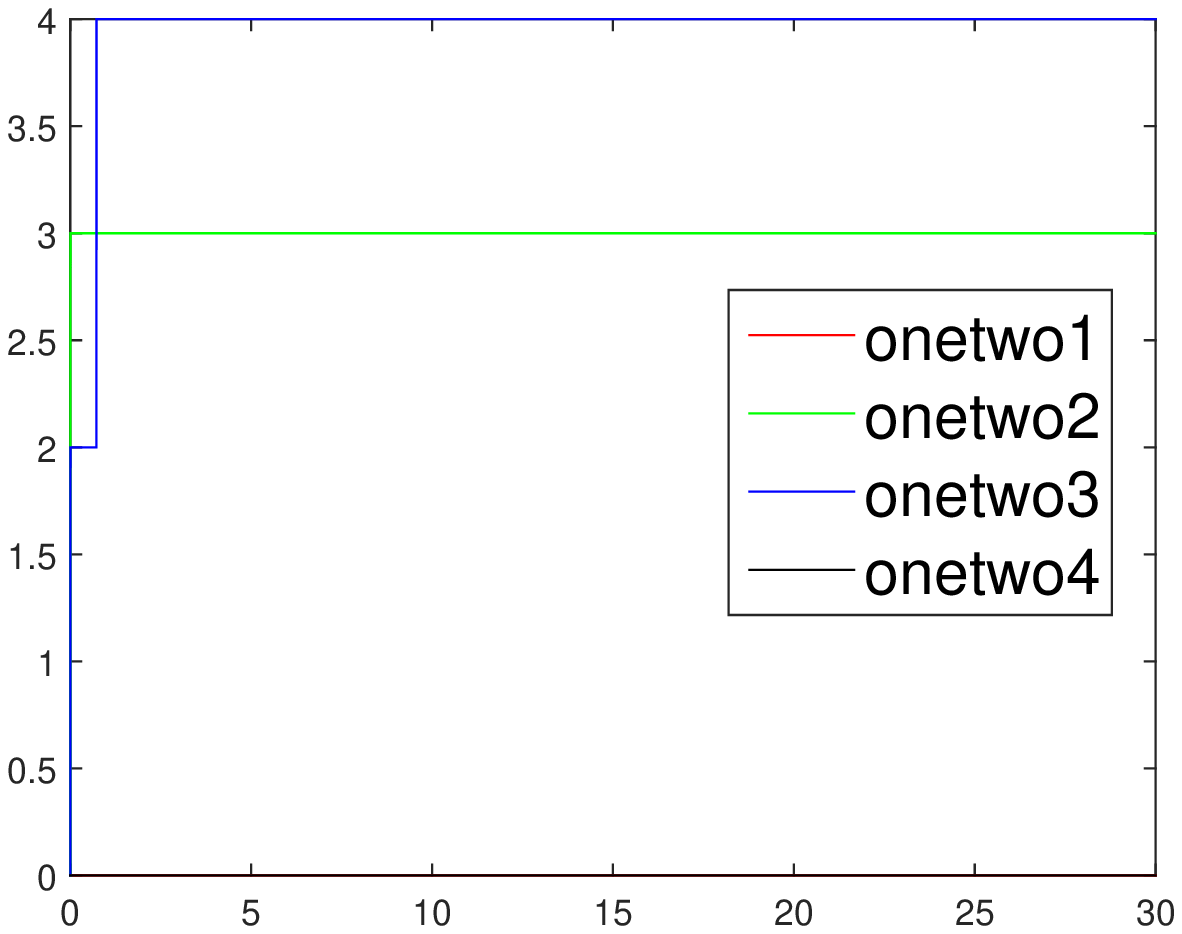}}
    \put(-158,65){\footnotesize $N_E$} \put(-80,-5){{\footnotesize Time}}
    \caption{Number of self-triggered requests made by each agent in
      an execution of the (a) self-triggered approach and (b)
      team-triggered approach with fixed dwell times and promises. For
      the latter execution, (c) depicts the number of event-triggered
      messages sent (broken promises) by each
      agent.}\label{fig:social}
    \vspace{-1ex}
  \end{figure}
}
{  
   \psfrag{0.5}[cc][cc]{\hspace*{-1 mm}\scriptsize 0.5}
   \psfrag{0.45}[cc][cc]{\hspace*{-1 mm}\scriptsize 0.45}
   \psfrag{0.4}[cc][cc]{\hspace*{-1 mm}\scriptsize 0.4}
   \psfrag{0.35}[cc][cc]{\hspace*{-1 mm}\scriptsize 0.35}
   \psfrag{0.3}[cc][cc]{\hspace*{-1 mm}\scriptsize 0.3}
   \psfrag{0.25}[cc][cc]{\hspace*{-1 mm}\scriptsize 0.25}
   \psfrag{0.2}[cc][cc]{\hspace*{-1 mm}\scriptsize 0.2}
   \psfrag{0.15}[cc][cc]{\hspace*{-1 mm}\scriptsize 0.15}
   \psfrag{0.1}[cc][cc]{\hspace*{-1 mm}\scriptsize 0.1}
   \psfrag{0.05}[cc][cc]{\hspace*{-1 mm}\scriptsize 0.05}
   \psfrag{0.6}[cc][cc]{\hspace*{-1 mm}\scriptsize 0.6}
   \psfrag{0.8}[cc][cc]{\hspace*{-1 mm}\scriptsize 0.8}
   \psfrag{100}[cc][cc]{\hspace*{-2 mm}\scriptsize 100}
   \psfrag{300}[cc][cc]{\hspace*{-2 mm}\scriptsize 300}
   \psfrag{200}[cc][cc]{\hspace*{-2 mm}\scriptsize 200}
   \psfrag{500}[cc][cc]{\hspace*{-2 mm}\scriptsize 500}
   \psfrag{600}[cc][cc]{\hspace*{-2 mm}\scriptsize 600}
   \psfrag{700}[cc][cc]{\hspace*{-2 mm}\scriptsize 700}
   \psfrag{400}[cc][cc]{\hspace*{-2 mm}\scriptsize 400}
   \psfrag{900}[cc][cc]{\hspace*{-2 mm}\scriptsize 900}
   \psfrag{500}[cc][cc]{\hspace*{-2 mm}\scriptsize 500}
   \psfrag{800}[cc][cc]{\hspace*{-2 mm}\scriptsize 800}
   \psfrag{1000}[cc][cc]{\hspace*{-2 mm}\scriptsize 1000}
   \psfrag{1200}[cc][cc]{\hspace*{-2 mm}\scriptsize 1200}
   \psfrag{1}[cc][cc]{\vspace*{-4 mm}\scriptsize 1}
   \psfrag{1.5}[cc][cc]{\vspace*{-4 mm}\scriptsize 1.5}
   \psfrag{0}[cc][cc]{\scriptsize 0}
\begin{figure}[htb]
  \centering
  \subfigure[]{\includegraphics[width=.45\linewidth]{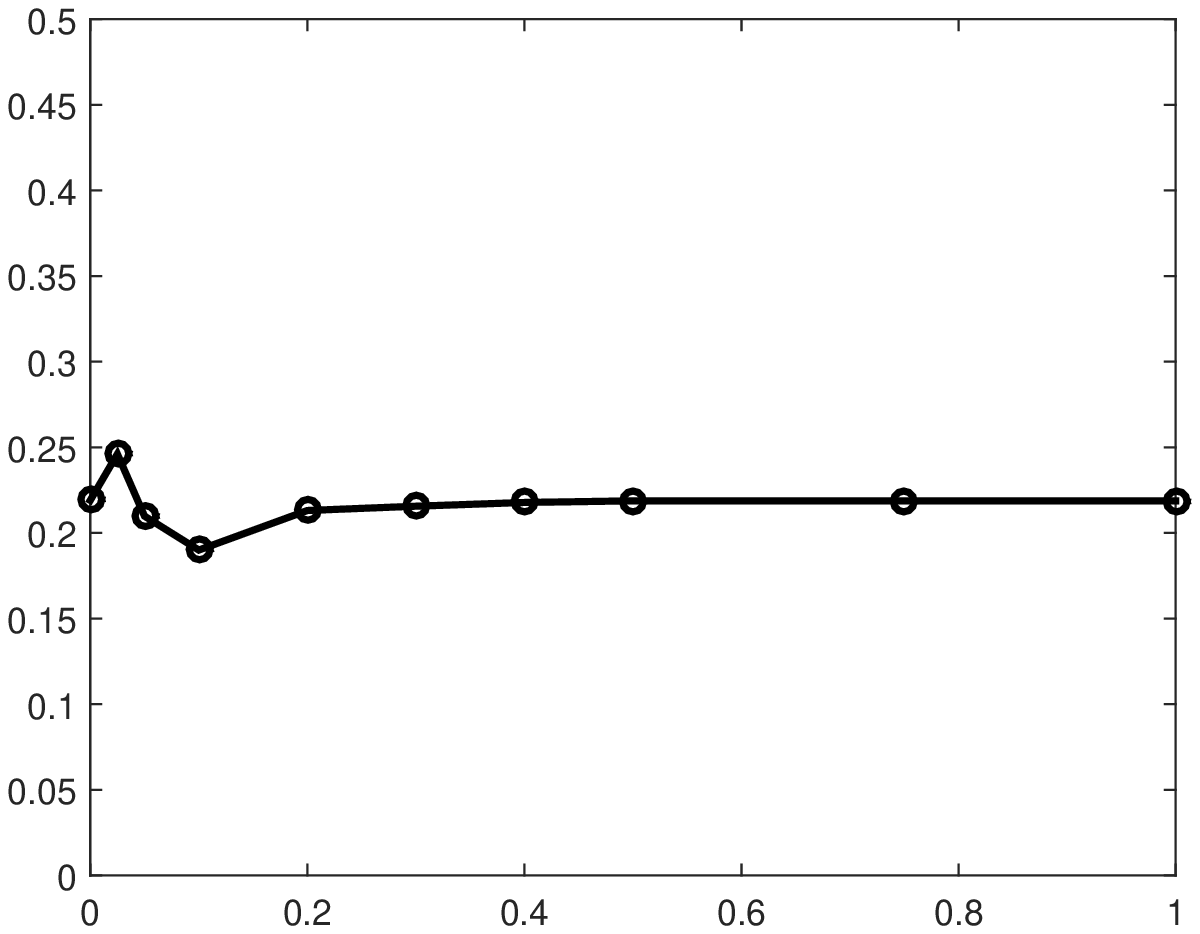}}
  \put(-225,105){\footnotesize $V(30)$} \put(-107,5){{\footnotesize $\lambda$}}
  \quad
  \subfigure[]{\includegraphics[width=.45\linewidth]{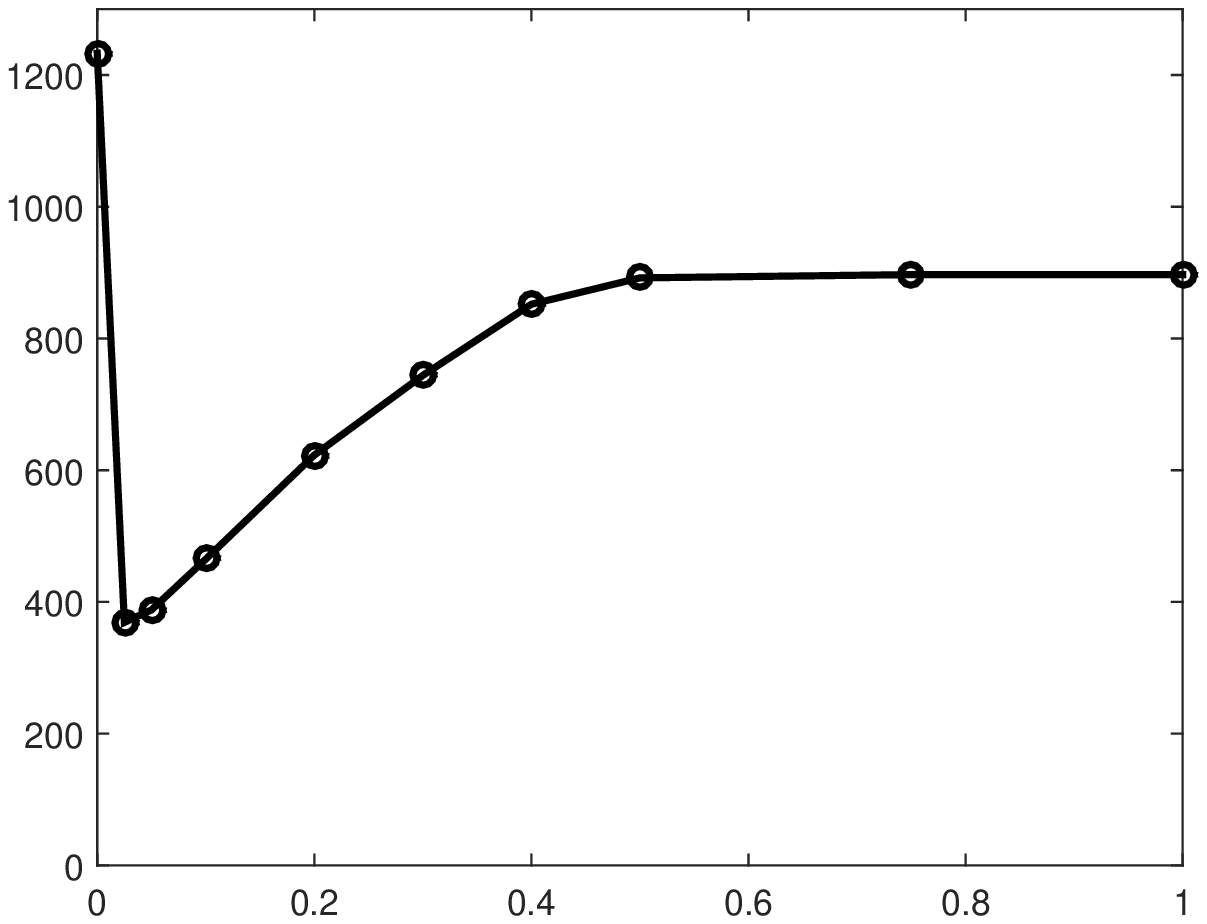}}
  \put(-222,105){\footnotesize $N_\text{comm}$} \put(-107,5){{\footnotesize $\lambda$}}
  \caption{Plots of (a) the value of the Lyapunov function at a fixed
    time (30 sec) and (b) the total number of messages exchanged in
    the network by this time for the team-triggered approach with
    varying tightness of promises
    $\lambda$.}\label{fig:changinglambda}
  \vspace{-1ex}
\end{figure}
}

Next, we illustrate the role that the tightness of promises has on the
network performance. With the notation of
Remark~\ref{re:control-promises-rules} for the static ball-radius
rule, let $\lambda = \frac{\delta}{2 \umax}$. Note that when $\lambda
= 0$, the promise generated by~\eqref{eq:staticpromise} is a
singleton, i.e., an exact promise. On the other hand, when $\lambda =
1$, the promise generated by~\eqref{eq:staticpromise} contains the
reachable set, corresponding to no actual commitment being made (i.e.,
the self-triggered approach
).  Figure~\ref{fig:changinglambda} compares the value of the Lyapunov
function after a fixed amount of time (30 seconds) and the total number
of messages sent $N_\text{comm}$ between agents by this time for
varying tightness of promises. The dwell times here are fixed at
$\dwellself = 0.3$ and $\dwellevent = 0.003$.  Note that a suitable
choice of~$\lambda$ helps greatly reduce the amount of communication
compared to the self-triggered approach ($\lambda = 1$) while
maintaining a similar convergence rate.

Finally, we demonstrate the added benefits of using adaptive promises
and dwell times.  Figure~\ref{fig:social-adaptive}(a) compares the
total number of messages sent in the self-triggered approach and the
team-triggered approaches with fixed promises and dwell times (FPFD),
fixed promises and adaptive dwell times (FPAD), adaptive promises and
fixed dwell times (APFD), and adaptive promises and dwell times
(APAD).  The parameters of the adaptive dwell time used
in~\eqref{eq:adaptive-dwell} are $\delta_d = 0.15$ and $\Delta_d =
0.3$. For agent $j \in \until{4}$, the radius~$\delta_j$ of the
dynamic ball-radius rule of Remark~\ref{re:control-promises-rules} is
$\delta_j (t) = 0.50 \TwoNorm{ u^{**}_j( \promisesets^j(t) ) -
  \safecontrol_j(x_j(t)) } + 10^{-6}$.  This plot shows the advantage
of the team-triggered approach in terms of required communication over
the self-triggered one and also shows the additional benefits of
implementing the adaptive promises and dwell time. This is because by
using the adaptive dwell time, agents decide to wait longer periods
for new information while their neighbors are still moving. By using the
adaptive promises, as agents near convergence, they are able to make
increasingly tighter promises, which allows them to request
information from each other less frequently. As
Figure~\ref{fig:social-adaptive}(b) shows, the network performance is
not compromised despite the reduction in communication.

{ \psfrag{onetwothreefour1}[cc][cc]{\hspace*{-10 mm} \scriptsize
    Self-triggered}%
  \psfrag{onetwothreefour2}[cc][cc]{\hspace*{-1.5 mm} \scriptsize
    Team-triggered FPFD}%
  \psfrag{onetwothreefour3}[cc][cc]{\hspace*{-1.5 mm} \scriptsize
    Team-triggered FPAD}%
  \psfrag{onetwothreefour4}[cc][cc]{\hspace*{-1.5 mm} \scriptsize
    Team-triggered APFD}%
  \psfrag{onetwothreefour5}[cc][cc]{\hspace*{-1.5 mm} \scriptsize
    Team-triggered APAD}%
  \psfrag{onetwoth4}[cc][cc]{\hspace*{0 mm} \scriptsize Agent 4}%
  \psfrag{0}[cc][cc]{}%
   \psfrag{10}[cc][cc]{\hspace*{-1 mm}\scriptsize 10}
   \psfrag{5}[cc][cc]{\hspace*{-1 mm}\scriptsize 5}
   \psfrag{15}[cc][cc]{\hspace*{-1 mm}\scriptsize 15}
   \psfrag{25}[cc][cc]{\hspace*{-1 mm}\scriptsize 25}
   \psfrag{20}[cc][cc]{\hspace*{-1 mm}\scriptsize 20}
   \psfrag{30}[cc][cc]{\hspace*{-1 mm}\scriptsize 30}
   \psfrag{40}[cc][cc]{\hspace*{-1 mm}\scriptsize 40}
   \psfrag{50}[cc][cc]{\hspace*{-1 mm}\scriptsize 50}
   \psfrag{60}[cc][cc]{\hspace*{-1 mm}\scriptsize 60}
   \psfrag{70}[cc][cc]{\hspace*{-1 mm}\scriptsize 70}
   \psfrag{80}[cc][cc]{\hspace*{-1 mm}\scriptsize 80}
   \psfrag{90}[cc][cc]{\hspace*{-1 mm}\scriptsize 90}
   \psfrag{100}[cc][cc]{\hspace*{-2 mm}\scriptsize 100}
   \psfrag{150}[cc][cc]{\hspace*{-2 mm}\scriptsize 150}
   \psfrag{200}[cc][cc]{\hspace*{-2 mm}\scriptsize 200}
   \psfrag{250}[cc][cc]{\hspace*{-2 mm}\scriptsize 250}
   \psfrag{300}[cc][cc]{\hspace*{-2 mm}\scriptsize 300}
   \psfrag{350}[cc][cc]{\hspace*{-2 mm}\scriptsize 350}
   \psfrag{400}[cc][cc]{\hspace*{-2 mm}\scriptsize 400}
   \psfrag{450}[cc][cc]{\hspace*{-2 mm}\scriptsize 450}
   \psfrag{500}[cc][cc]{\hspace*{-2 mm}\scriptsize 500}
   \psfrag{600}[cc][cc]{\hspace*{-2 mm}\scriptsize 600}
   \psfrag{700}[cc][cc]{\hspace*{-2 mm}\scriptsize 700}
   \psfrag{400}[cc][cc]{\hspace*{-2 mm}\scriptsize 400}
   \psfrag{900}[cc][cc]{\hspace*{-2 mm}\scriptsize 900}
   \psfrag{500}[cc][cc]{\hspace*{-2 mm}\scriptsize 500}
   \psfrag{800}[cc][cc]{\hspace*{-2 mm}\scriptsize 800}
   \psfrag{1000}[cc][cc]{\hspace*{-2 mm}\scriptsize 1000}
   \psfrag{0.5}[cc][cc]{\vspace*{-4 mm}\scriptsize 0.5}
   \psfrag{1}[cc][cc]{\vspace*{-4 mm}\scriptsize 1}
   \psfrag{1.5}[cc][cc]{\vspace*{-4 mm}\scriptsize 1.5}
   \psfrag{0}[cc][cc]{\scriptsize 0}
  \begin{figure}[htb]
    \centering
    \subfigure[]{\includegraphics[width=.45\linewidth]{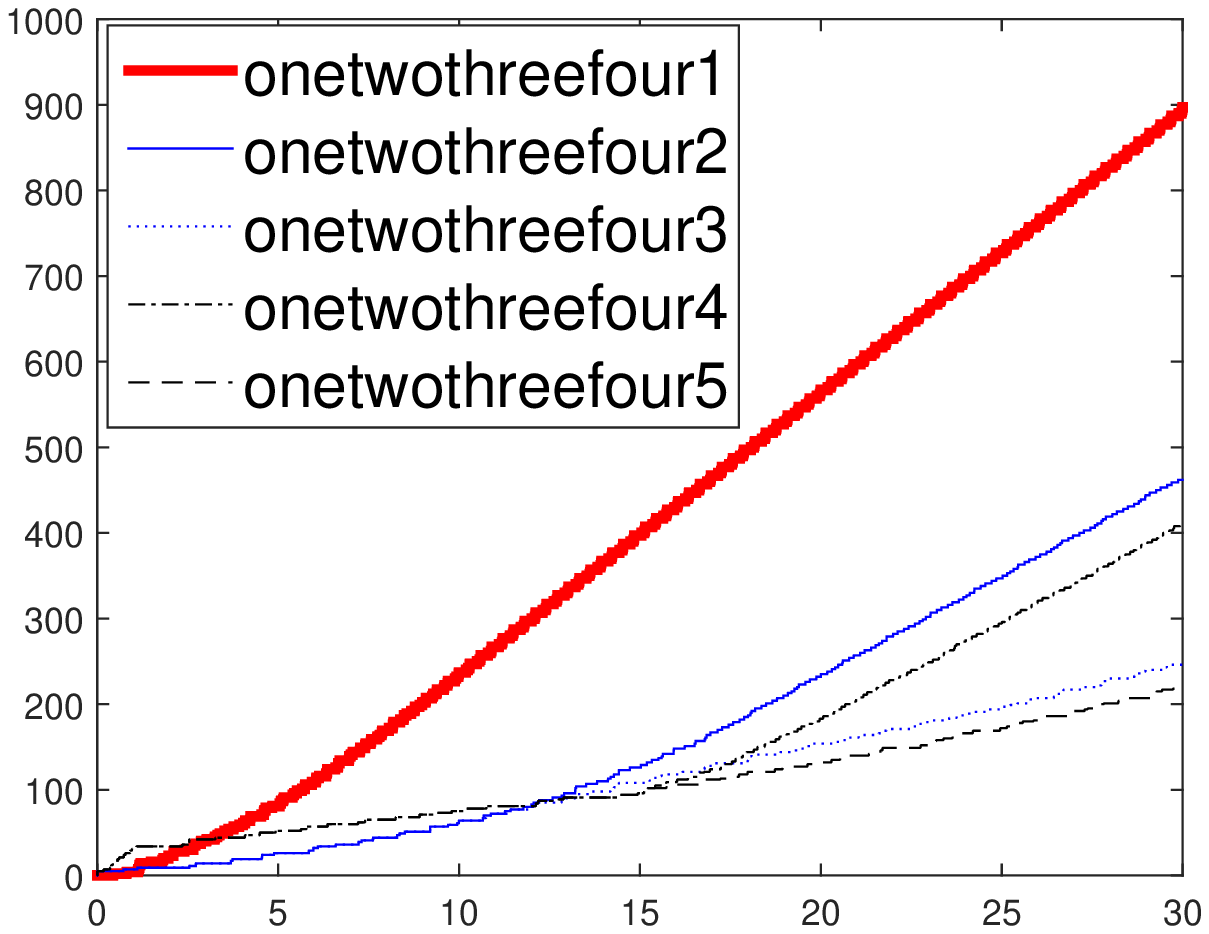}}
    \put(-225,105){\small $N_\text{comm}$}
    \put(-110,4){{\footnotesize Time}}
    \quad
    \subfigure[]{\includegraphics[width=.45\linewidth]{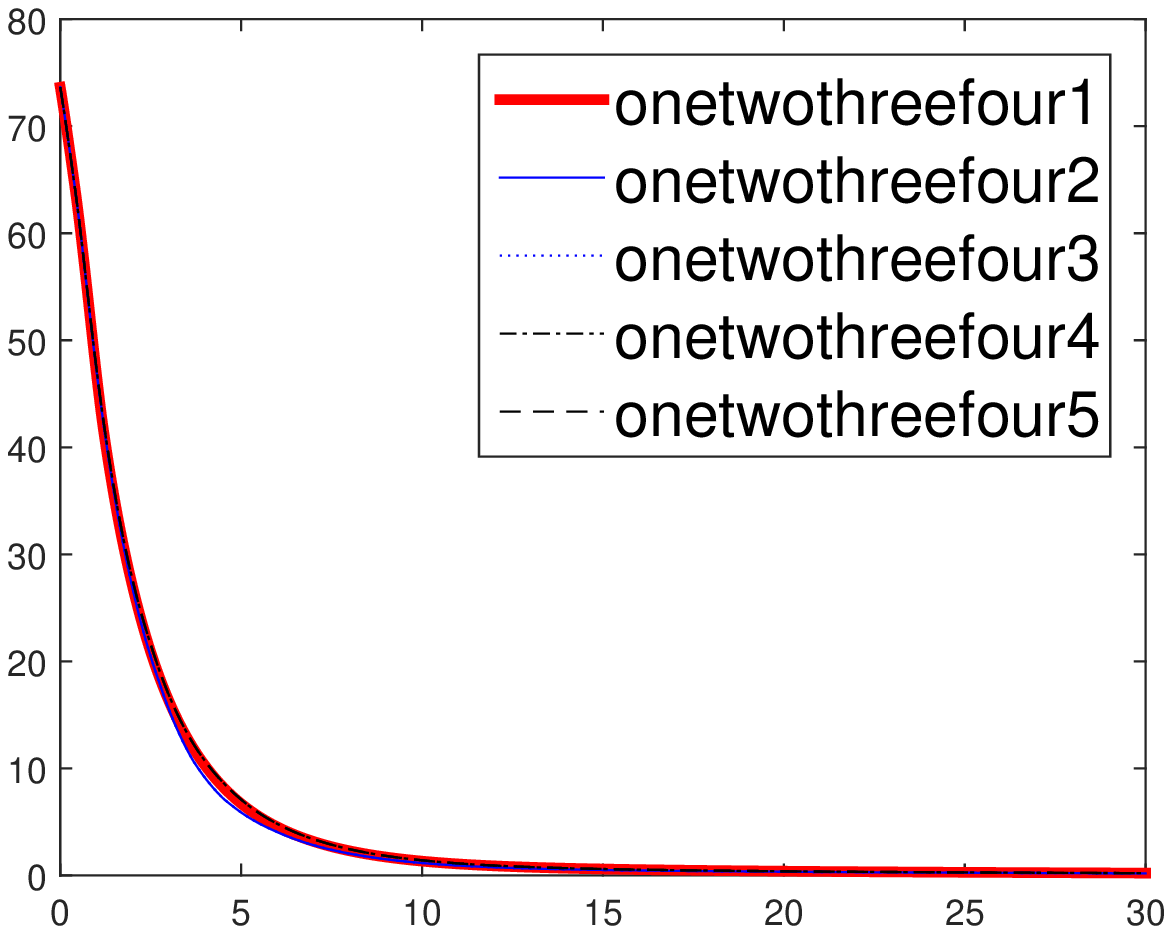}}
    \put(-205,105){\small $V$} \put(-110,4){{\footnotesize Time}}
    \caption{Plots of (a) the total number of messages sent and (b)
      the evolution of the Lyapunov function~$V$ for executions of
      self-triggered approach and the team-triggered approaches with
      fixed promises and dwell times (FPFD), fixed promises and
      adaptive dwell times (FPAD), adaptive promises and fixed dwell
      times (APFD), and adaptive promises and dwell
      times~(APAD). }\label{fig:social-adaptive}
    \vspace*{-1ex}
\end{figure}
}

\myclearpage
\section{Conclusions}\label{se:conclusions}

We have proposed a novel approach, termed team-triggered, that
combines ideas from event- and self-triggered control for the
implementation of distributed coordination strategies for networked
cyber-physical systems.  Our approach is based on agents making
promises to each other about their future states.  If a promise is
broken, this triggers an event where the corresponding agent provides
a new commitment. As a result, the information available to the agents
is set-valued and can be used to schedule when in the future further
updates are needed.  We have provided a formal description and
analysis of team-triggered coordination strategies and have also
established robustness guarantees in scenarios where communication is
unreliable. The proposed approach opens up numerous venues for future
research. Among them, we highlight the robustness under disturbances
and sensor noise, more general models for individual agents, the
design of team-triggered implementations that guarantee the invariance
of a desired set in distributed scenarios, the relaxation of the
availability of the safe-mode control via controllers that allow
agents to execute maneuvers that bring them back to their current
state, relaxing the requirement on the negative semidefiniteness of
the derivative of the Lyapunov function along the evolution of each
individual agent, methods for the systematic design of controllers
that operate on set-valued information models, understanding the
implementation trade-offs in the design of promise rules, analytic
guarantees on the performance improvements with respect to
self-triggered strategies, and the impact of evolving topologies on
the generation of promises.


\end{document}